\theoremstyle{definition}
\newtheorem{defn}{Definition}[section]
\theoremstyle{plain}
\newtheorem{lem}[defn]{Lemma}
\theoremstyle{plain}
\newtheorem{cor}[defn]{Corollary}
\theoremstyle{plain}
\theoremstyle{plain}
\newtheorem{thm}[defn]{Theorem}
\theoremstyle{plain}
\newtheorem{introthm}{Theorem}
\theoremstyle{plain}
\theoremstyle{remark}
\newtheorem{rmk}[defn]{Remark}
\crefname{lem}{Lemma}{Lemmas}
\crefname{defn}{Definition}{Definitions}
\crefname{cor}{Corollary}{Corollaries}
\crefname{prop}{Proposition}{Propositions}
\crefname{thm}{Theorem}{Theorems}
\crefname{rmk}{Remark}{Remarks}
\crefname{section}{Section}{Sections}
\newcommand{\Z}{{\mathbb{Z}}}
\newcommand{\Spec}[1]{\operatorname{Spec}(#1)}
\newcommand{\PSig}[1]{{\mathcal{P}_{\Sigma}(#1)}}
\newcommand{\colim}[1]{{\underset{\substack{#1}}{\operatorname{colim}}\,}}
\newcommand{\colimil}[1]{{{\operatorname{colim}}_{#1}\,}}
\renewcommand{\lim}[1]{{\underset{\substack{#1}}{\operatorname{lim}}\,}}
\newcommand{\limil}[1]{{{\operatorname{lim}}_{#1}\,}}
\newcommand{\Hom}[1]{{\operatorname{Hom}_{#1}}}
\newcommand{\set}[2]{\left\{\, {#1} \,\middle\vert\, {#2} \,\right\}}
\newcommand{\ShvTopH}[2]{\operatorname{Shv}^{\operatorname{h}}_{#1}({#2})}
\newcommand{\ShvTopNH}[2]{\operatorname{Shv}^{\operatorname{nh}}_{#1}({#2})}
\newcommand{\pronis}{\operatorname{pronis}}
\newcommand{\ShvNis}[1]{\operatorname{Shv}_{\operatorname{nis}}(#1)}
\newcommand{\ShvProNisH}[1]{\ShvTopH{\pronis}{#1}}
\newcommand{\ShvProNisNH}[1]{\ShvTopNH{\pronis}{#1}}
\newcommand{\cd}[1]{{\operatorname{cd}({#1})}}
\newcommand{\pideal}[1]{{\mathfrak #1}}
\newcommand{\hens}[2]{{\operatorname{Hens}_{#1}(#2)}}
\newcommand{\smooth}[1]{\operatorname{Sm}_{#1}} 
\newcommand{\prosmooth}[1]{\operatorname{ProSm}_{#1}} 
\newcommand{\prosmoothaff}[1]{\operatorname{ProSmAff}_{#1}} 
\newcommand{\ind}[2]{\operatorname{Ind}_{#1}(#2)}
\newcommand{\nis}{\operatorname{nis}}
\author{Klaus Mattis\footnote{\href{mailto:klaus.mattis@uni-mainz.de}{klaus.mattis@uni-mainz.de}}}
\date{\today}
\title{The pro-Nisnevich topology}
\begin{document}

\maketitle

\begin{abstract}
    We construct the pro-Nisnevich topology, an analog of the pro-étale topology.
    We then show that the Nisnevich $\infty$-topos embeds into 
    the pro-Nisnevich $\infty$-topos, and that the pro-Nisnevich $\infty$-topos 
    is locally of homotopy dimension $0$. 
\end{abstract}

\section*{Introduction}
In \cite{bhatt2014proetale}, Bhatt and Scholze constructed the pro-étale topology,
and showed that the étale topos over a scheme embeds into the pro-étale topos.

In this short note, we do a similar construction for the Nisnevich topology.
Let $S$ be a qcqs scheme of finite Krull-dimension (e.g.\ a field), and write $\smooth{S}$ for the category of smooth quasi-compact $S$-schemes,
equipped with the Nisnevich topology \cite[Appendix A]{bachmann2020norms}. Write $\ShvNis{\smooth{S}}$ for the $\infty$-topos 
of sheaves on this site.
We will construct a site $(\prosmooth{S}, \pronis)$, called the \emph{pro-Nisnevich site}.
Write $\ShvProNisH{\prosmooth{S}}$ for the $\infty$-topos of hypersheaves on this site.
This $\infty$-topos is called the \emph{pro-Nisnevich topos}.
We prove the following results:
\begin{introthm}[\cref{lem:psig-equiv}]
    There is a full subcategory $W \subset \prosmooth{S}$
    and an equivalence $\ShvProNisH{\prosmooth{S}} \cong \PSig{W}$.
    In particular, the pro-Nisnevich topos is locally of homotopy dimension $0$.
\end{introthm}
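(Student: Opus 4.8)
The plan is to identify $W$ with a basis of the pro-Nisnevich site consisting of \emph{weakly contractible} objects, to recognise the topology induced on $W$ as the one generated by coproduct decompositions, and then to import the conclusion across the resulting comparison equivalence of $\infty$-topoi; the statement about homotopy dimension will then be formal. Concretely, I will take $W\subset\prosmooth{S}$ to be the full subcategory of pro-Nisnevich-locally contractible objects constructed above (the appropriate coproducts of henselian local objects of $\prosmooth{S}$), and I will use three properties established in the preceding results: \textbf{(a)} $W$ is closed under finite coproducts in $\prosmooth{S}$; \textbf{(b)} every object of $\prosmooth{S}$ admits a pro-Nisnevich covering by an object of $W$, so $W$ is a basis for $\pronis$; and \textbf{(c)} every pro-Nisnevich covering of an object of $W$ is refined by a coproduct decomposition of that object --- equivalently, every effective epimorphism onto a sheaf represented by an object of $W$ admits a section.

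First I would set up the comparison. By (b), the covering sieves generated by morphisms out of $W$ are cofinal among all pro-Nisnevich covering sieves, so a comparison lemma for $\infty$-sites in its hypercomplete form (see, e.g., \cite[Appendix~A]{bachmann2020norms}) shows that restriction along $W\hookrightarrow\prosmooth{S}$ is an equivalence $\ShvProNisH{\prosmooth{S}}\simeq\ShvProNisH{W}$, where the right-hand side is taken for the induced topology. Next, combining (a) and (c) shows that the induced topology on $W$ is the extensive topology generated by coproduct decompositions. Since $\PSig{W}$ is hypercomplete (by the homotopy-dimension statement proved below there is no difference between sheaves and hypersheaves), the identification of $\PSig{W}$ with the sheaf $\infty$-topos for the coproduct-decomposition topology on a category of schemes --- a standard feature of nonabelian derived $\infty$-categories, parallel to the pro-\'etale picture of \cite{bhatt2014proetale} --- gives $\ShvProNisH{W}\simeq\PSig{W}$. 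Composing the two equivalences yields $\ShvProNisH{\prosmooth{S}}\simeq\PSig{W}$.

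It then remains to see that $\PSig{W}$, and hence the pro-Nisnevich topos, is locally of homotopy dimension $0$. The representable sheaves $j(X)$ for $X\in W$ generate $\PSig{W}$ under colimits, and $\PSig{W}_{/j(X)}\simeq\PSig{W_{/X}}$. The slice $W_{/X}$ again has disjoint, universal finite coproducts and --- crucially --- a terminal object $\id{X}$, so the terminal object of $\PSig{W_{/X}}$ is the representable $j(\id{X})$. As $\Map{\PSig{W_{/X}}}{j(\id{X})}{-}$ is evaluation at $\id{X}$, it preserves sifted colimits, in particular geometric realisations; therefore every effective epimorphism onto $j(\id{X})$ splits, so every $0$-connective object $F$ of $\PSig{W_{/X}}$ admits a global section (equivalently, $F(\id{X})$ is nonempty). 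Hence $\PSig{W_{/X}}$ has homotopy dimension $\le 0$ for every $X\in W$, so $\PSig{W}$ is locally of homotopy dimension $0$; in particular it is hypercomplete, which is what legitimises the interchange of sheaves and hypersheaves used above.

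The one genuinely hard input is fact (c) --- that pro-Nisnevich covers of objects of $W$ split --- which is where the finite Krull dimension of $S$ and the henselian-local nature of the objects of $W$ enter; but this is part of the construction of $W$ in the preceding section rather than of the present assembly, which is otherwise formal. Within this argument, the points that need care are the precise formulation of the comparison lemma in the hypercomplete $\infty$-categorical setting and the identification $\PSig{W}_{/j(X)}\simeq\PSig{W_{/X}}$, under which the induced topologies must also be matched up.
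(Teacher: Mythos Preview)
Your strategy is essentially the paper's: take $W$ to be the weakly contractible objects of the pro-Nisnevich site, use that they form a basis to reduce to $W$ with the induced (extensive) topology, and identify the resulting hypersheaf category with $\PSig{W}$. The paper packages steps (a)--(c) as the statement that $(\prosmoothaff{S},\pronis)$ is a \emph{locally weakly contractible site} and then cites \cite[Lemma~B.7, Lemma~4.20]{mattis2024unstable} for the equivalence and the homotopy-dimension claim; you instead unpack those lemmas by hand, which is fine. One minor structural difference: the paper first passes from $\prosmooth{S}$ to $\prosmoothaff{S}$ (\cref{lem:affine-non-affine-comparison}) before invoking the comparison, whereas you go directly to $W$.

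Two inaccuracies are worth correcting. First, your parenthetical description of $W$ as ``the appropriate coproducts of henselian local objects'' is wrong: the objects of $W$ are spectra of \emph{cdw-contractible} rings, whose $\pi_0$ is an arbitrary extremally disconnected compact Hausdorff space and which are henselian along their Jacobson radical with absolutely flat quotient (\cref{lem:contractible-cover}); they are not in general finite disjoint unions of local schemes. This does not affect your formal argument, since you only use properties (a)--(c), but it misrepresents the construction. Second, the finite Krull dimension of $S$ plays \emph{no} role in (c) or anywhere in this theorem; \cref{lem:contractible-cover} holds for any ring. The Krull-dimension hypothesis is used only in \cref{lem:nis-post-complete}, for Postnikov-completeness of the ordinary Nisnevich topos, which enters in the other main theorem.
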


\begin{introthm}[\cref{lem:embedding}]
    There is a geometric morphism of $\infty$-topoi 
    \begin{equation*}
        \nu^* \colon \ShvNis{\smooth{S}} \rightleftarrows \ShvProNisH{\prosmooth{S}} \colon \nu_*,
    \end{equation*}
    where $\nu_*$ is given by restriction and $\nu^*$ is fully faithful.
     
    Moreover, an $n$-truncated sheaf $F \in \ShvProNisH{\prosmooth{S}}$ is in the essential image of $\nu^*$ 
    if and only if for all $U \in \prosmooth{S}$ and all presentations of $U$ as a cofiltered limit
    $U \cong \limil{i} U_i$ (with the $U_i \in \smooth{S}$) 
    the canonical comparison map $\colimil{i} F(U_i) \to F(U)$ is an equivalence.
\end{introthm}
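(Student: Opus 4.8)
The geometric morphism is induced by the fully faithful inclusion $i\colon\smooth{S}\hookrightarrow\prosmooth{S}$. Because every Nisnevich cover of an object of $\smooth{S}$ is a pro-Nisnevich cover, restriction along $i$ carries pro-Nisnevich hypersheaves to Nisnevich hypersheaves, and since $S$ has finite Krull dimension $\ShvNis{\smooth{S}}$ is hypercomplete, so this restriction is the asserted functor $\nu_*$. It preserves limits (computed pointwise for hypersheaves) and is accessible, hence admits a left adjoint $\nu^*$, namely the hypercompletion of the left Kan extension $i_!$. Left exactness of $\nu^*$ (so that $(\nu^*,\nu_*)$ is a geometric morphism), full faithfulness of $\nu^*$, and the characterisation of its essential image will all be deduced from a single explicit computation of $\nu^*$ on truncated sheaves.

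\emph{The key formula.} For every $n$-truncated $F\in\ShvNis{\smooth{S}}$, every $U\in\prosmooth{S}$, and every presentation $U\simeq\limil{i}U_i$ with $U_i\in\smooth{S}$, the canonical comparison map $\colimil{i}F(U_i)\to(\nu^* F)(U)$ is an equivalence. I would argue in three steps. (i) Objects of $\smooth{S}$ are finitely presented over $S$, so mapping out of them turns the defining cofiltered limits into filtered colimits; using this, together with the fact that any two presentations of $U$ admit a common refinement, the assignment $G_F\colon U\mapsto\colimil{i}F(U_i)$ is well defined, and it restricts to $F$ on $\smooth{S}$ by taking a constant presentation. (ii) $G_F$ is a pro-Nisnevich hypersheaf: by \cref{lem:psig-equiv}, which identifies $\ShvProNisH{\prosmooth{S}}$ with $\PSig{W}$, it suffices that $G_F$ preserve finite products on $W$, and it does, since $F$ sends finite disjoint unions to products and filtered colimits commute with finite products of anima. (iii) Hence the map $\nu^* F\to G_F$ adjoint to the identity of $\nu_* G_F=F$ is defined, and one checks it is an equivalence by evaluating on $W$: a pro-Nisnevich hypercover of an object of $W$ may be presented as a cofiltered limit of Nisnevich hypercovers of the $U_i$, and the associated totalisation commutes with the filtered colimit precisely because $F$ is $n$-truncated, so the cosimplicial object is uniformly bounded; Nisnevich hyperdescent for $F$ then completes the comparison.

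From the formula, $\nu^*$ is left exact: evaluated on any $U\simeq\limil{i}U_i$, a finite limit of $n$-truncated sheaves is preserved because finite limits of hypersheaves are computed pointwise and filtered colimits commute with finite limits of anima; a general pullback is reduced to a pullback of representable sheaves (colimits in an $\infty$-topos are universal and quasi-compact representables are compact), and representables are $0$-truncated, so the formula applies. Granting left exactness, $\nu^*$ commutes with truncation, and since both $\ShvNis{\smooth{S}}$ and, by \cref{lem:psig-equiv}, $\ShvProNisH{\prosmooth{S}}$ are Postnikov complete, $\nu^*$ also commutes with Postnikov limits. The formula gives $\nu_*\nu^* F\simeq F$ for $n$-truncated $F$, i.e.\ the unit is an equivalence on truncated objects; combined with the preceding sentence this forces the unit to be an equivalence in general, so $\nu^*$ is fully faithful.

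For the characterisation, let $F\in\ShvProNisH{\prosmooth{S}}$ be $n$-truncated. If $F\simeq\nu^* G$, then $G\simeq\nu_* F$ is $n$-truncated with $(\nu_* F)(U_i)=F(U_i)$, so the formula applied to $G$ says exactly that every comparison map $\colimil{i}F(U_i)\to F(U)$ is an equivalence. Conversely, suppose all these maps are equivalences; apply the formula to $G:=\nu_* F$ and examine the counit $\varepsilon\colon\nu^*\nu_* F\to F$. By the triangle identities $\varepsilon$ is an equivalence on objects of $\smooth{S}$, hence on each $U_i$; naturality of $\varepsilon$ against the comparison maps, which are equivalences for $\nu^*\nu_* F$ by the formula and for $F$ by hypothesis, then forces $\varepsilon_U$ to be an equivalence for every $U$, so $F\simeq\nu^*\nu_* F$ lies in the essential image. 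The main obstacle is step (iii) above, namely identifying the hypercompletion of $i_! F$ with the naive colimit $G_F$: it rests on describing pro-Nisnevich hypercovers as cofiltered limits of Nisnevich hypercovers and on commuting the resulting totalisations past the filtered colimit, which is exactly why the statement is confined to $n$-truncated $F$, and \cref{lem:psig-equiv} is what reduces the whole verification to the elementary finite-product condition on $W$.
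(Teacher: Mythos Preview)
Your overall strategy --- establish the ``key formula'' $(\nu^* F)(U) \simeq \colimil{i} F(U_i)$ for $n$-truncated $F$ by showing that the presheaf $G_F$ defined by this formula is already a pro-Nisnevich hypersheaf, and then deduce everything else --- is exactly the paper's approach. But your argument for the crucial step has a gap. In step~(ii) you invoke the equivalence $\ShvProNisH{\prosmooth{S}} \cong \PSig{W}$ and check that $G_F|_W$ takes finite coproducts to products. This only shows $G_F|_W \in \PSig{W}$; it does \emph{not} show that $G_F$ itself is a hypersheaf on $\prosmooth{S}$, because a presheaf whose restriction to $W$ lies in $\PSig{W}$ need not coincide with the unique hypersheaf extending that restriction. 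Consequently, in step~(iii) the map $\nu^* F \to G_F$ cannot be constructed via the adjunction as you claim (that requires $G_F \in \ShvProNisH{\prosmooth{S}}$), and comparing the two only on $W$ does not give you the formula on $\smooth{S}$, which is what you actually use for $\nu_*\nu^* F \simeq F$ and for the essential-image argument.

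The idea you sketch in step~(iii) --- approximating a pro-Nisnevich hypercover by a cofiltered system of Nisnevich hypercovers of the $U_i$, commuting the totalisation past the filtered colimit using $n$-truncatedness, and invoking Nisnevich hyperdescent for $F$ --- is exactly the right ingredient, but it is misplaced: on objects of $W$ every pro-Nisnevich cover splits (that is the definition of cdw-contractible), so there is nothing to verify there. That argument should instead be applied to an arbitrary $U \in \prosmooth{S}$ to prove directly that $G_F$ satisfies hyperdescent; this is precisely the content the paper defers to the analogous pro-\'etale computation. Once $G_F$ is known to be a hypersheaf, steps~(ii) and~(iii) become unnecessary and the rest of your deductions go through.
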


\subsection*{Acknowledgement}
I thank Tom Bachmann for helpful discussions, and Timo Weiß for reading 
a draft of this article.

The author acknowledges support by the Deutsche Forschungsgemeinschaft
(DFG, German Research Foundation) through the Collaborative Research
Centre TRR 326 \textit{Geometry and Arithmetic of Uniformized
Structures}, project number 444845124.

\section{The Pro-Nisnevich Topology}
In this section, we develop an analog of the pro-étale topology from \cite{bhatt2014proetale},
adapted for the Nisnevich topology.
We show that every affine scheme can be covered in this topology by a cdw-contractible ring,
an analog of w-contractible rings from \cite{bhatt2014proetale}.

Recall the following definition from \cite[Section 2.1 and 2.2]{bhatt2014proetale},
especially the alternative characterizations of w-local spaces from \cite[Lemma 2.1.4]{bhatt2014proetale}:
\begin{defn}
    Let $A$ be a ring.
    We say that $A$ is w-local 
    if the set of closed points $\Spec{A}^c$ is closed in $\Spec{A}$,
    and the map $\Spec{A}^c \to \Spec{A} \to \pi_0(\Spec A)$ is bijective.
\end{defn}

Recall the following definitions:
\begin{defn}
    Let $f \colon A \to B$ be a ring map.
    We say that
    \begin{enumerate}
        \item $f$ is a local isomorphism if for every prime ideal $\pideal{q} \subset B$ 
        there exists a $g \in B$, $g \notin \pideal{q}$ such that $A \to B_g$ induces an open immersion $\Spec{B_g} \to \Spec{A}$,
        \item $f$ is an \emph{ind-Zariski localization} 
        if $f$ can be written as a filtered colimit of local isomorphisms $f_i \colon A \to B_i$, and 
        \item $f$ is an \emph{ind-Zariski cover} if $f$ is a faithfully flat ind-Zariski localization.
    \end{enumerate}
\end{defn}

We have the following results about w-local rings:
\begin{lem} \label{lemma:w-local-cover-exists}
    Let $A$ be a ring.
    Then there exists an ind-Zariski cover $A \to A^Z$
    with $A^Z$ w-local, such that $\Spec{A^Z}^c \to \Spec{A}$
    is bijective.
\end{lem}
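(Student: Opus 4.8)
The plan is to carry over the construction of the \emph{w-localization} from \cite[Section 2.2]{bhatt2014proetale}; the notions of ind-Zariski map and of w-local ring involved are literally those used there, so no Nisnevich-specific input is needed for this statement. Everything reduces to a question about the spectral space $X := \Spec{A}$: it suffices to produce an ind-Zariski cover $A \to A^Z$ for which $X^Z := \Spec{A^Z}$ is w-local --- i.e.\ its subspace $(X^Z)^c$ of closed points is closed in $X^Z$ and maps bijectively onto $\pi_0(X^Z)$ --- and for which the projection $X^Z \to X$ restricts to a bijection $(X^Z)^c \to X$. Heuristically $\Spec{A^Z}$ should be the disjoint union $\coprod_{x \in X}\Spec{\mathcal{O}_{X,x}}$ of local spectra, which are w-local with unique closed point $x$ and jointly cover $X$; since this is not affine it must instead be realized as a cofiltered limit of affine schemes over $X$, equivalently $A^Z$ as a filtered colimit of $A$-algebras.

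To do so I would use that the constructible topology makes $X$ into a profinite set $X^{\mathrm{cons}}$; fix a presentation $X^{\mathrm{cons}} \cong \limil{i} X_i$ as a cofiltered limit of finite discrete sets with surjective transition maps, so that each surjection $X^{\mathrm{cons}} \twoheadrightarrow X_i$ induces a finite decomposition of $X$ into constructible pieces $X_{i,s}$ ($s \in X_i$, the fibers, clopen in $X^{\mathrm{cons}}$). Following \cite{bhatt2014proetale}, one attaches to the $i$-th decomposition an $A$-algebra $A_i$, a finite product indexed by $X_i$ of ind-Zariski localizations of $A$ built out of quasi-compact opens shrinking onto the respective pieces, arranged so that $\Spec{A_i} \to X$ is faithfully flat and so that, as $i$ increases, these localizations approximate the local spectra; the refinement maps $X_{i'} \twoheadrightarrow X_i$ induce compatible maps $A_i \to A_{i'}$, and one sets $A^Z := \colimil{i} A_i$. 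Then $A \to A^Z$ is an ind-Zariski cover: it is ind-Zariski because each $A_i$ is (a finite product of local isomorphisms is again a local isomorphism --- localizing the product at an idempotent exhibits an open immersion $D(g) \hookrightarrow X$ --- and this passes to filtered colimits), hence so is $A^Z = \colimil{i} A_i$; and it is faithfully flat because it is flat and $\Spec{A^Z} = \limil{i}\Spec{A_i} \to X$ is surjective, which one sees by lifting each $x \in X$ through the tower (at every level $x$ lies in a unique piece, yielding compatible lifts, and a cofiltered limit of nonempty closed subsets of spectral spaces is nonempty).

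It remains to show that $A^Z$ is w-local with $(X^Z)^c \to X$ a bijection. For this one studies the cofiltered limit $X^Z = \limil{i}\Spec{A_i}$ of spectral spaces: the finite decompositions $X_i$ separate the points of $X^{\mathrm{cons}}$ in the limit, so the connected components of $X^Z$ are exactly the local spectra $\Spec{\mathcal{O}_{X,x}}$, each contributing a single closed point lying over $x$; consequently $(X^Z)^c$ is the copy of $X^{\mathrm{cons}}$ sitting inside $X^Z$, it is closed, it meets every connected component exactly once (so it bijects onto $\pi_0(X^Z)$), and the projection carries it bijectively onto $X^{\mathrm{cons}} = X$.

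I expect this last step to be the main obstacle, for two reasons. First, the finite approximations $A_i$ must be set up with care: the constructible pieces $X_{i,s}$ are in general not spectra of affine open subschemes of $X$, so the ``localization towards a piece'' has to be engineered from the quasi-compact opens containing it, which is where the spectral-space combinatorics of \cite[Sections 2.1--2.2]{bhatt2014proetale} enter. Second, one must check that being w-local --- the ``one closed point per connected component'' condition together with closedness of the locus of closed points --- is preserved under the cofiltered limit $\limil{i}\Spec{A_i}$; this relies on the structure theory of spectral spaces: Hochster duality, the compatibility of $\pi_0$ with cofiltered limits along spectral maps, and the good behaviour of specializations and closed subsets under quasi-compact spectral maps.
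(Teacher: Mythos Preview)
Your approach is essentially the same as the paper's: the paper's proof consists solely of the citation ``Combine \cite[Lemma 2.2.4]{bhatt2014proetale} and \cite[Lemma 2.1.10]{bhatt2014proetale}'', and you are (correctly) invoking and sketching precisely that construction. Your outline of the w-localization --- constructible stratifications indexed by a cofiltered system, pro-open localizations along the strata, and the identification of the connected components of $X^Z$ with the $\Spec{\mathcal{O}_{X,x}}$ --- matches the content of those lemmas, so there is nothing to add beyond tightening the informal parts you already flagged.
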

\begin{proof}
    Combine \cite[Lemma 2.2.4]{bhatt2014proetale} and \cite[Lemma 2.1.10]{bhatt2014proetale}.
\end{proof}

\begin{lem}\label{lemma:zariski-cover-of-profinite-map}
    For a ring $A$ and a map $T \to \pi_0(\Spec A)$ of totally disconnected compact Hausdorff spaces, 
    there is an ind-Zariski localization $A \to B$ such that $\Spec B \to \Spec A$ 
    gives rise to the given map $T \to \pi_0(\Spec A)$ on applying $\pi_0$.
\end{lem}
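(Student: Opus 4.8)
The plan is to adapt the argument of \cite[Section 2.2]{bhatt2014proetale}; recall that for a compact Hausdorff space being ``totally disconnected'' is the same as being ``profinite''. I will freely use the standard dictionary between idempotents $e \in R$ and clopen subsets $D(e) \subseteq \Spec{R}$, the identification of $\Spec{-}$ of a filtered colimit of rings with the cofiltered limit of the corresponding affine schemes, the stability of ind-Zariski localizations under composition and under filtered colimits, and the fact that $\pi_0$ carries a cofiltered limit of spectral spaces along spectral maps to the limit of the associated profinite sets; these are either formal or recorded in \cite{bhatt2014proetale}.

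\textbf{Step 1: the constant case.} Suppose first that the given map is the projection $S \times \pi_0(\Spec{A}) \to \pi_0(\Spec{A})$ for a profinite set $S \cong \limil{i} S_i$ with the $S_i$ finite. Put $B \coloneqq \colimil{i} \prod_{S_i} A$, the colimit being formed along the maps induced by the transition maps of $S$. Each diagonal $A \to \prod_{S_i} A$ is a local isomorphism --- inverting the idempotent that projects onto a given factor turns it into the identity of $A$ --- so $A \to B$ is an ind-Zariski localization (indeed an ind-Zariski cover, since $A \to \prod_{S_i} A$ is faithfully flat when $S_i \neq \emptyset$). Then $\Spec{B} \cong \limil{i}(S_i \times \Spec{A}) \cong S \times \Spec{A}$, and applying $\pi_0$, which commutes with this limit, recovers the projection $S \times \pi_0(\Spec{A}) \to \pi_0(\Spec{A})$.

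\textbf{Step 2: reduction to the constant case.} For a general map $f \colon T \to X \coloneqq \pi_0(\Spec{A})$, the graph map $(\id{T}, f) \colon T \hookrightarrow T \times X$ is a closed embedding, since $T$ is compact and $X$ is Hausdorff. Applying Step 1 with $S = T$ yields an ind-Zariski localization $A \to B_1$ with $\pi_0(\Spec{B_1}) \cong T \times X$ compatibly with the projection to $X$. Now $T \subseteq T \times X$ is closed, hence an intersection $T = \bigcap_k Y_k$ over a cofiltered family of clopen subsets of $\pi_0(\Spec{B_1})$; let $e_k \in B_1$ be the idempotent for which $\Spec{B_1[e_k^{-1}]} \subseteq \Spec{B_1}$ is the preimage of $Y_k$. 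Each $B_1 \to B_1[e_k^{-1}]$ is a local isomorphism, the inclusions $Y_{k'} \subseteq Y_k$ induce a compatible system of transition maps, and $B \coloneqq \colimil{k} B_1[e_k^{-1}]$ is therefore an ind-Zariski localization of $B_1$, hence of $A$. Finally $\Spec{B} \cong \limil{k} \Spec{B_1[e_k^{-1}]}$, so $\pi_0(\Spec{B}) \cong \limil{k} Y_k = T$, and unwinding the two steps identifies the induced map $\pi_0(\Spec{B}) \to \pi_0(\Spec{A})$ with $\operatorname{pr}_X \circ (\id{T}, f) = f$.

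\textbf{Main obstacle.} The only ingredient that is not pure bookkeeping is the continuity of $\pi_0$ along cofiltered limits of spectral spaces, used both in Step 1 (to compute $\pi_0(S \times \Spec{A})$ and $\pi_0(\Spec{B})$) and in Step 2. I would take this from \cite{bhatt2014proetale}, or deduce it directly from the description of $\pi_0(\Spec{R})$ as the spectrum of the Boolean algebra of idempotents of $R$, together with the fact that idempotents commute with filtered colimits of rings. Granting that, the remaining verifications --- flatness, the local-isomorphism property of the maps in play, and the identification of the final map with $f$ --- are routine.
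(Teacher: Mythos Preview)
Your argument is correct and is essentially the proof of \cite[Lemma 2.2.8]{bhatt2014proetale}, which is exactly what the paper cites without further detail. So you have supplied the content behind the paper's one-line reference; there is no divergence in approach to discuss.
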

\begin{proof}
    This is \cite[Lemma 2.2.8]{bhatt2014proetale}.
\end{proof}

\subsection{Absolutely flat rings}

Recall the following definitions from \cite{bhatt2014proetale}:
\begin{defn} 
    Let $A$ be a ring. We say that $A$ is \emph{absolutely flat} if $A$
    is reduced and has dimension $0$. 

    Let $f \colon A \to B$ be a ring map.
    We say that $f$ is \emph{weakly étale} if $f$ is flat with 
    flat diagonal.
    We say that $f$ is \emph{ind-étale} if $f$ can be written as a 
    filtered colimit of étale morphisms $f_i \colon A \to B_i$.
\end{defn}

\begin{lem} \label{lemma:weakly-etale-absolutely-flat}
    Let $f \colon A \to B$ be a morphism of rings with $A$ absolutely flat.
    Suppose that $f$ is weakly étale. Then $B$ is absolutely flat. 
\end{lem}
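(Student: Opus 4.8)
The plan is to use the structure theory of weakly étale morphisms to reduce to the case where $f$ is étale, where absolute flatness of the target is elementary. To begin, I would recall that a ring $R$ is absolutely flat if and only if it is von Neumann regular, i.e.\ for every $r \in R$ there is an $r' \in R$ with $r = r^2 r'$; equivalently, $R_{\mathfrak p}$ is a field for every prime $\mathfrak p \subset R$. The elementwise description makes clear that a filtered colimit of absolutely flat rings is again absolutely flat.

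The main input is the theorem (Olivier; Bhatt--Scholze) that every weakly étale ring map is ind-étale, \cite[Theorem 2.3.4]{bhatt2014proetale}. Applied to $f$, it lets me write $B \cong \colimil{i} B_i$ as a filtered colimit of étale $A$-algebras. By the previous paragraph it then suffices to prove each $B_i$ is absolutely flat, i.e.\ reduced and of Krull dimension $0$. Reducedness is the standard fact that an étale (a fortiori smooth) algebra over a reduced ring is reduced. For dimension $0$, any chain of primes of $B_i$ contracts to a chain of primes of $A$, necessarily constant since $\dim A = 0$; so the whole chain lies over a single $\mathfrak p \in \Spec A$, i.e.\ in the fibre $\Spec{B_i \otimes_A \kappa(\mathfrak p)}$, which is étale over the field $\kappa(\mathfrak p)$, hence a finite product of fields, hence $0$-dimensional. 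Thus each $B_i$, and therefore $B$, is absolutely flat.

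I expect the only real obstacle to be the appeal to ``weakly étale $\Rightarrow$ ind-étale''; the rest is routine. A variant relying on comparable input: for $\mathfrak q \in \Spec B$ with contraction $\mathfrak p$, the base change $B \otimes_A A_{\mathfrak p}$ is weakly étale over the field $A_{\mathfrak p}$ and $B_{\mathfrak q}$ is a localization of it, which reduces the claim to ``a weakly étale algebra over a field is absolutely flat'', after which one notes that $B$ embeds into the product of its (now field-valued) localizations at all primes.
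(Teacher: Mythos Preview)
Your main argument rests on a misreading of \cite[Theorem 2.3.4]{bhatt2014proetale}: that theorem does \emph{not} say that every weakly étale map is ind-étale. It says that for weakly étale $A \to B$ there exists a faithfully flat ind-étale $B \to C$ such that the composite $A \to C$ is ind-étale. Whether weakly étale and ind-étale actually coincide is, to my knowledge, not established; Bhatt--Scholze present their theorem precisely as a workable substitute for that unavailable statement. So the step ``write $B \cong \colimil{i} B_i$ with each $B_i$ étale over $A$'' is unjustified as written.

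The correct form of the theorem does salvage your strategy, at the cost of one extra descent step. Your étale/filtered-colimit argument shows that the auxiliary ring $C$, being ind-étale over the absolutely flat $A$, is absolutely flat. Now absolute flatness descends along the faithfully flat map $B \to C$: every $B$-module $M$ has $M \otimes_B C$ flat over $C$ (since every $C$-module is), and faithful flatness of $B \to C$ then forces $M$ to be flat over $B$. Equivalently, both reducedness and Krull dimension $0$ descend along faithfully flat ring maps. Your variant at the end has the same gap in disguise: the reduction to a field base is correct, but ``a weakly étale algebra over a field is absolutely flat'' is exactly the lemma in a special case and still needs an argument --- the cleanest again being Theorem~2.3.4 plus descent.

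For comparison, the paper does not argue at all: it simply cites the relevant Stacks Project tag.
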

\begin{proof}
    See \cite[\href{https://stacks.math.columbia.edu/tag/092I}{Tag 092I}]{stacks-project}.
\end{proof}

\begin{lem} \label{lemma:absolutely-flat-profinite-set}
    Let $A$ be a ring of dimension $0$ (e.g.\ $A$ absolutely flat).
    Then $\Spec{A}$ is totally disconnected compact Hausdorff.
\end{lem}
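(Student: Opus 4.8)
The plan is to reduce to the reduced case and then produce a basis of clopen sets using von Neumann regularity, after which the topological conclusions are formal. Since Krull dimension $0$ passes to $A_{\mathrm{red}}$ and the canonical map $\Spec{A_{\mathrm{red}}} \to \Spec{A}$ is a homeomorphism, I may assume $A$ is reduced of dimension $0$, i.e.\ $A$ is absolutely flat. Such a ring is von Neumann regular: for every $f \in A$ there is $x \in A$ with $f = f^2 x$, so $e := fx$ is idempotent and $(f) = (e)$.

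Next I would observe that for an idempotent $e \in A$ the basic open set $D(e) = \Spec{A} \setminus V(e)$ is in fact closed, since it equals $V(1-e)$: the relation $e(1-e) = 0$ forces every prime to contain $e$ or $1-e$, while $e + (1-e) = 1$ forbids it from containing both. Hence $D(e)$ is clopen, and therefore every basic open $D(f) = D(e)$ is clopen. Since the sets $D(f)$ form a basis for the Zariski topology, $\Spec{A}$ has a basis of clopen sets.

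Finally I combine this with the two facts valid for an arbitrary ring: $\Spec{A}$ is quasi-compact, and $\Spec{A}$ is $T_0$ (indeed sober). A $T_0$ space with a basis of clopen sets is Hausdorff: for $x \neq y$, choose by $T_0$ an open set containing one of them but not the other, shrink it to a basic clopen $U$ still containing that point, and then $U$ together with its complement separates $x$ and $y$; moreover any space with a basis of clopen sets is totally disconnected. Together with quasi-compactness this gives that $\Spec{A}$ is totally disconnected compact Hausdorff. The only non-formal input — and hence the ``main obstacle'', such as it is — is the production of the clopen basis, which rests on the fact that finitely generated ideals in a reduced zero-dimensional ring are generated by idempotents; the remaining point-set topology is standard and could equally well be cited from the Stacks project.
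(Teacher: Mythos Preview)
Your proof is correct. The route, however, is the reverse of the paper's. The paper first obtains Hausdorffness directly from a Stacks project citation (Tag 0CKV: a separated scheme of Krull dimension $0$ is Hausdorff as a topological space), and then combines this with the fact that $\Spec{A}$ is spectral (hence has a basis of quasi-compact opens) to conclude that the basic opens are clopen, whence total disconnectedness. You instead pass to $A_{\mathrm{red}}$, invoke the characterization of absolutely flat rings as von Neumann regular to produce the clopen basis \emph{algebraically} (every principal ideal is generated by an idempotent), and only then deduce Hausdorffness from $T_0$ plus the clopen basis. Your argument is more self-contained and makes the algebraic mechanism explicit, at the cost of the reduction step and the implicit appeal to the equivalence ``reduced of dimension $0$ $\Leftrightarrow$ von Neumann regular''; the paper's version is terser but leans on an external reference for the Hausdorff step. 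Either order is fine, and the point-set endgame is the same.
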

\begin{proof}
    Clearly $\Spec{A}$ is compact.
    It is Hausdorff, see e.g. \cite[\href{https://stacks.math.columbia.edu/tag/0CKV}{Tag 0CKV}]{stacks-project},
    which applies since $\Spec{A}$ is affine and hence separated.
    Since $\Spec{A}$ is spectral, it has a basis of compact open subsets.
    In Hausdorff spaces, compact subsets are closed,
    thus $\Spec{A}$ has a basis of clopen subsets.
    This implies that $A$ is totally disconnected.
\end{proof}

\begin{cor}
    Let $A$ be absolutely flat. Then $A$ is w-local.
\end{cor}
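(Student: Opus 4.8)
The plan is to reduce everything to \cref{lemma:absolutely-flat-profinite-set}. First I would use that $A$ has Krull dimension $0$: then every prime ideal of $A$ is maximal, so the set of closed points satisfies $\Spec{A}^c = \Spec{A}$. In particular $\Spec{A}^c$ is (trivially) closed in $\Spec{A}$, which is the first of the two conditions in the definition of a w-local ring.

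It then remains to see that the map $\Spec{A}^c \to \pi_0(\Spec{A})$ is bijective. By \cref{lemma:absolutely-flat-profinite-set}, $\Spec{A}$ is totally disconnected compact Hausdorff. In a totally disconnected space the connected components are exactly the singletons, so the canonical surjection $q \colon \Spec{A} \to \pi_0(\Spec{A})$ onto the set of connected components is a bijection (and, $\Spec{A}$ being compact and $\pi_0(\Spec{A})$ Hausdorff, even a homeomorphism). Under the identification $\Spec{A}^c = \Spec{A}$ established above, the composite $\Spec{A}^c \hookrightarrow \Spec{A} \xrightarrow{q} \pi_0(\Spec{A})$ is precisely $q$, hence bijective. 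This gives the second condition, and $A$ is w-local.

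I do not expect any real obstacle: the corollary is a formal consequence of \cref{lemma:absolutely-flat-profinite-set} together with the elementary fact that a zero-dimensional ring has only maximal primes. The only point worth spelling out is why $\pi_0$ of a totally disconnected compact Hausdorff space is its underlying set --- i.e.\ that connected components of such a space are points --- which is standard point-set topology. (Note that reducedness of $A$ is not actually used here; Krull dimension $0$ suffices, via \cref{lemma:absolutely-flat-profinite-set}.)
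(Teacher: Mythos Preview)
Your proof is correct and follows essentially the same approach as the paper: invoke \cref{lemma:absolutely-flat-profinite-set} to see that $\Spec{A}$ is totally disconnected compact Hausdorff, from which both defining conditions of w-locality follow immediately. The only cosmetic difference is that you deduce $\Spec{A}^c = \Spec{A}$ from Krull dimension $0$, whereas the paper extracts it from Hausdorffness; either way works.
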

\begin{proof}
    We know that $\Spec{A}$ is totally disconnected compact Hausdorff from \cref{lemma:absolutely-flat-profinite-set}.
    In particular, all points of $\Spec{A}$ are closed
    and $\Spec{A} \cong \pi_0(\Spec A)$.
\end{proof}

\begin{lem} \label{lemma:colimit-of-absolutely-flat}
    Let $A = \colimil{i} A_i$ be a filtered colimit 
    of absolutely flat rings.
    Then $A$ is absolutely flat.
\end{lem}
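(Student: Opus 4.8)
The plan is to reduce to the classical characterization of absolutely flat rings as von Neumann regular rings: a commutative ring $R$ is reduced of Krull dimension $0$ if and only if for every $a \in R$ there exists $x \in R$ with $a = a^2 x$. First I would recall (or quickly reprove) this equivalence. For the nontrivial direction, if $R$ is reduced of dimension $0$ then every localization $R_{\mathfrak p}$ is a reduced local ring of dimension $0$, hence a field; thus for $a \in R$ the module $(a)/(a^2)$ has vanishing localization at each prime, so it vanishes, giving $a \in (a^2)$. Conversely, the identity $a = a^2 x$ propagates to $a = a^{n}x^{n-1}$ for all $n\ge 1$ (whence $R$ is reduced) and survives localization, making each $R_{\mathfrak p}$ a field (whence $\dim R = 0$).

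Granting this, the lemma is immediate. Given $a \in A = \colimil{i} A_i$, filteredness lets me lift $a$ to an element $a_i \in A_i$ for some index $i$; since $A_i$ is absolutely flat there is $x_i \in A_i$ with $a_i = a_i^2 x_i$, and pushing this identity forward along $A_i \to A$ yields $a = a^2 x$ with $x$ the image of $x_i$. Hence $A$ is von Neumann regular, i.e.\ absolutely flat. The essential point is that von Neumann regularity is witnessed by a single equation among finitely many elements, hence is automatically stable under filtered colimits.

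Alternatively, staying closer to the stated definition, one can check the two conditions separately: a filtered colimit of reduced rings is reduced (a nilpotent element of $A$ comes from some $A_i$ and becomes nilpotent, hence zero, already in some $A_j$), while $\Spec{A} \cong \limil{i} \Spec{A_i}$ is a cofiltered limit of totally disconnected compact Hausdorff spaces by \cref{lemma:absolutely-flat-profinite-set}, so it is again such a space and in particular of Krull dimension $0$. I do not anticipate any real obstacle in either route; the only step needing a little care is invoking the von Neumann regular characterization — or the description of $\Spec$ of a filtered colimit of rings — correctly.
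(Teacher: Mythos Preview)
Your proof is correct. The main route via the von Neumann regular characterization is genuinely different from the paper's, which verifies ``reduced'' and ``dimension $0$'' separately straight from the definition: reducedness via $\Hom{}(\Z[X]/(X^n),-)$ commuting with filtered colimits (essentially your alternative argument, phrased slightly differently), and dimension $0$ by appeal to Stacks Project tags on the topology of $\Spec{}$ of a filtered colimit. Your von Neumann regular argument is more self-contained and makes the stability under filtered colimits transparent---a single equational identity among finitely many elements descends to some $A_i$---whereas the paper's route stays closer to the stated definition at the cost of outside references. Your alternative argument is essentially the paper's for reducedness; for dimension $0$ you instead argue topologically (a cofiltered limit of totally disconnected compact Hausdorff spaces is again such, hence Hausdorff, hence every prime is maximal), which is a clean variant of what the paper cites.
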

\begin{proof}
    Being reduced and of dimension $0$ are properties of rings
    that are stable under filtered colimits:
    For the first claim, note that a nilpotent element in a ring 
    is the same as a ring map from $\Z[X]/(X^n)$ for some $n \ge 2$.
    But since $\Z[X]/(X^n)$ is of finite presentation over $\Z$,
    we see that $\Hom{}(\Z[X]/(X^n), \colimil{i} A_i) \cong \colimil{i} \Hom{}(\Z[X]/(X^n), A_i) = 0$,
    since the $A_i$ are reduced.
    For the second claim, 
    use \cite[\href{https://stacks.math.columbia.edu/tag/01YW}{Tag 01YW} and \href{https://stacks.math.columbia.edu/tag/01YY}{Tag 01YY (2)}]{stacks-project}.
\end{proof}

\begin{defn} 
    Let $f \colon \Spec B \to \Spec A$ be a map of affine schemes.
    We define the \emph{completely decomposed locus} of $f$ as the subset 
    \begin{equation*}
        \cd{f} \coloneqq \set{x \in \Spec B}{k(f(x)) \to k(x) \text{ is an isomorphism}} \subset \Spec{B}.
    \end{equation*}
    We say that $f$ is \emph{completely decomposed} if $f(\cd{f}) = \Spec A$,
    i.e.\ over every point of $\Spec A$ there is a point in $\Spec B$ that induces
    an isomorphism on residue fields.
\end{defn}

\begin{lem} \label{lemma:completely-decomposed-locus-of-ind-map}
    Let $A = \colimil{i \in I} A_i$ be a filtered colimit of rings.
    Suppose wlog $I$ has an initial element $0$.
    Denote by $f_i \colon \Spec{A_i} \to \Spec{A_0}$ the map induced by $0 \to i$,
    and by $f_\infty \colon \Spec A \to \Spec{A_0}$ the projection.

    Then $\cd{f_\infty} = \limil{i} \, \cd{f_i}$.
\end{lem}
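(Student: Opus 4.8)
The plan is to reduce the statement to the standard fact that residue fields are compatible with cofiltered limits of affine schemes, and otherwise to chase functoriality. Write $p_i \colon \Spec A \to \Spec{A_i}$ for the projection, let $x \in \Spec A$ correspond to a prime $\pideal p \subset A$, and let $x_i = p_i(x) \in \Spec{A_i}$ correspond to the contraction $\pideal p_i = (A_i \to A)^{-1}(\pideal p)$. Since $0$ is initial we have $f_0 = \id{\Spec{A_0}}$ and $f_\infty = f_i \circ p_i$ for every $i$, so that $f_\infty(x) = x_0$ and $f_i(x_i) = x_0$. Two elementary observations come first. If $K \to L \to M$ is a composite of extensions of fields which happens to be an isomorphism, then $L \to M$ is surjective, hence an isomorphism, hence so is $K \to L$; applied to the evident factorizations of the residue field maps of the $f_j$ through the intermediate schemes, this shows that the transition maps $\Spec{A_j} \to \Spec{A_i}$ carry $\cd{f_j}$ into $\cd{f_i}$, so that $\limil i \cd{f_i}$ is a well-defined subset of $\limil i \Spec{A_i} = \Spec A$. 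Unwinding this identification, $x$ lies in $\limil i \cd{f_i}$ precisely when $k(x_0) \to k(x_i)$ is an isomorphism for every $i$.

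The key input is that $k(x) \cong \colimil i k(x_i)$, the colimit over the filtered category $I$ with structure maps the residue field maps along $A_i \to A$. One way to see this: every element of $\pideal p \subset A = \colimil i A_i$ is the image of an element of some $A_i$, which then necessarily lies in $\pideal p_i$, so $\pideal p = \colimil i \pideal p_i$ and hence $A/\pideal p = \colimil i A_i/\pideal p_i$; moreover the transition maps $A_i/\pideal p_i \to A_j/\pideal p_j$ are injective, since $\pideal p_i$ is the contraction of $\pideal p_j$, so that forming fraction fields commutes with the colimit. Alternatively one simply cites the standard results on limits of schemes.

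Granting this, both inclusions fall out. If $x \in \cd{f_\infty}$, then $k(x_0) = k(f_\infty(x)) \to k(x)$ is an isomorphism; for each $i$ it factors as $k(x_0) \to k(x_i) \to k(x)$ along $A_0 \to A_i \to A$, so by the first observation $k(x_0) \to k(x_i)$ is an isomorphism and $x_i \in \cd{f_i}$, giving $x \in \limil i \cd{f_i}$. Conversely, if $x \in \limil i \cd{f_i}$, then each $k(x_0) \to k(x_i)$ is an isomorphism, hence so is each transition map $k(x_i) \to k(x_j)$, and therefore the canonical map $k(x_0) \to \colimil i k(x_i) = k(x)$ from a filtered colimit of isomorphisms is itself an isomorphism; as this is the residue field map along $A_0 \to A$, i.e.\ $k(f_\infty(x)) \to k(x)$, we conclude $x \in \cd{f_\infty}$.

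There is no serious obstacle here: the argument is bookkeeping once $k(x) \cong \colimil i k(x_i)$ is available, and the only points requiring care are checking that the restricted transition maps on completely decomposed loci are well-defined (so that $\limil i \cd{f_i}$ is a sensible object) and making sure the various residue field maps are correctly identified with their expected factorizations, which is pure functoriality of $k(-)$.
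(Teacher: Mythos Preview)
Your proof is correct and follows essentially the same approach as the paper: both directions are handled by factoring the residue field map $k(x_0) \to k(x)$ through the intermediate $k(x_i)$ and invoking the identification $k(x) \cong \colimil{i} k(x_i)$ (which the paper cites as \cite[\href{https://stacks.math.columbia.edu/tag/0CUG}{Lemma 0CUG}]{stacks-project}). Your additional care in checking that the transition maps preserve the completely decomposed loci, so that $\limil{i}\,\cd{f_i}$ is well-defined, is a welcome clarification but does not change the argument.
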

\begin{proof}
    Let $p_i \colon \Spec A \to \Spec{A_i}$ be the projections.
    Let $x_\infty \in \Spec{A}$,
    and $x_i \coloneqq p_i(x_\infty) \in \Spec{A_i}$.
    
    Suppose $x_\infty \in \cd{f_\infty}$ and thus $k(x_0) \cong k(x_\infty)$.
    Thus, for all $i$ we have a factorization of this isomorphism $k(x_0) \to k(x_i) \to k(x_\infty)$.
    Hence $k(x_i) \cong k(x_0)$ and we get $x_i \in \cd{f_i}$.
    This implies $x_\infty \in \limil{i} \, \cd{f_i}$.
    
    On the other hand, suppose that $x_\infty \in \limil{i} \, \cd{f_i}$.
    This implies that $x_i \in \cd{f_i}$.
    By \cite[\href{https://stacks.math.columbia.edu/tag/0CUG}{Lemma 0CUG}]{stacks-project},
    we know that $k(x_\infty) = \colimil{i} k(x_i)$.
    But $k(x_0) \cong k(x_i)$ and thus $k(x_\infty) \cong \colimil{i} k(x_0) \cong k(x_0)$.
    Hence $x_\infty \in \cd{f_\infty}$.
\end{proof}

\begin{lem} \label{lem:component-limit}
    Let $A$ an absolutely flat ring.
    Write $A \cong \colimil{i} A_i$ as a filtered colimit of its finitely generated subrings.
    Let $x \in \Spec{A}$.
    For each $i$ write $p_i \colon \Spec{A} \to \Spec{A_i}$ for the projection,
    and $Z_i(x) \subset \Spec{A_i}$ for the connected component of $p_i(x)$.
    Then $\{x\} \cong \limil{i} Z_i(x)$.
\end{lem}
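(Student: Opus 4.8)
The plan is to identify $\Spec A$ with the cofiltered limit $\limil{i}\Spec{A_i}$ in topological spaces: the canonical map $\Spec A \to \limil{i}\Spec{A_i}$ is a homeomorphism, since $\Spec$ carries a filtered colimit of rings to the corresponding cofiltered limit of spectra. Under this identification $\limil{i} Z_i(x)$ is the closed subspace $\bigcap_i p_i^{-1}(Z_i(x))$ of $\Spec A$; note that the $Z_i(x)$ do assemble into a diagram indexed by the same category, because for $i \le j$ the transition map $\Spec{A_j} \to \Spec{A_i}$ carries $Z_j(x)$ into $Z_i(x)$, its image being connected and containing $p_i(x)$. It thus suffices to show $\bigcap_i p_i^{-1}(Z_i(x)) = \{x\}$; the inclusion $\supseteq$ is immediate, as $p_i(x) \in Z_i(x)$ for every $i$.

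For the inclusion $\subseteq$, fix $y \in \Spec A$ with $y \ne x$; I will produce an index $i$ with $p_i(y) \notin Z_i(x)$. By \cref{lemma:absolutely-flat-profinite-set}, $\Spec A$ is totally disconnected, compact and Hausdorff, and (as shown in the proof of that lemma) it has a basis of clopen subsets; since it is also $T_1$, there is a clopen subset $C \subseteq \Spec A$ with $x \in C$ and $y \notin C$. Clopen subsets of the spectrum of a ring correspond to its idempotents, so $C = \set{\pideal{q} \in \Spec A}{e \notin \pideal{q}}$ for a (unique) idempotent $e \in A$.

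It remains to descend $C$ along the filtered colimit $A \cong \colimil{i} A_i$. The element $e$ is the image of some $a \in A_{i_0}$, and the identity $a^2 = a$, which holds after passing to $A$, already holds in some $A_i$ with $i \ge i_0$; hence the image $e_i \in A_i$ of $a$ is an idempotent lifting $e$. Then $V \coloneqq \set{\pideal{q} \in \Spec{A_i}}{e_i \notin \pideal{q}}$ is clopen in $\Spec{A_i}$ and satisfies $p_i^{-1}(V) = C$, so $p_i(x) \in V$ while $p_i(y) \notin V$. Finally, $Z_i(x)$ is connected and meets the clopen set $V$ (it contains $p_i(x)$), so $Z_i(x) \subseteq V$; consequently $p_i(y) \notin Z_i(x)$, and therefore $y \notin p_i^{-1}(Z_i(x)) \supseteq \bigcap_j p_j^{-1}(Z_j(x))$. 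This is exactly what was needed.

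The one genuine obstacle is this last descent step. Carried out directly in topological spaces it would require descent of quasi-compact opens along cofiltered limits of spectral spaces together with a compactness argument in the constructible topology; routing it through idempotents --- which always lift through filtered colimits of rings --- bypasses all of that. The remaining inputs ($\Spec$ of a filtered colimit of rings, the idempotent description of clopen subsets, and the fact that a connected component is contained in each of its clopen neighbourhoods) are standard.
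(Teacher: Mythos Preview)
Your proof is correct and follows essentially the same approach as the paper: separate $x$ from $y$ by a clopen in $\Spec A$, descend it to some $\Spec{A_i}$, and use that the connected component $Z_i(x)$ must lie inside the descended clopen. The only difference is cosmetic: where the paper says ``by descent'' to obtain the clopen $U_j \subset \Spec{A_j}$, you make this explicit via idempotents (and since the $A_i$ are \emph{subrings} of $A$, the lift of $e$ is already idempotent at the first stage $i_0$, so your passage to a larger $i$ is harmless but unnecessary).
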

\begin{proof}
    Clearly $x \in \limil{i} Z_i(x)$.
    Suppose on the other hand that $y \in \Spec{A}$, with $y \neq x$.
    Since $\Spec{A}$ is totally disconnected compact Hausdorff by \cref{lemma:absolutely-flat-profinite-set},
    there exists a clopen subset $U \subset \Spec{A}$ into clopen subsets with $x \in U$ and $y \notin U$.
    By descent, there is an index $j$, and $U_j \subset \Spec{A_j}$ clopen 
    such that $U = \Spec{A} \times_{\Spec{A_j}} U_j$.
    But then $Z_i(x) \subseteq \Spec{A_i} \times_{\Spec{A_j}} U_j$ for all $i \ge j$,
    and hence $\limil{i} Z_i(x) \subseteq U$.
    Since $y \notin U$, we conclude $y \notin \limil{i} Z_i(x)$.    
\end{proof}

\begin{lem} \label{lemma:completely-decomposed-locus-of-etale-map}
    Let $f \colon A \to B$ be an étale ring map, with $A$ absolutely flat.
    Then $\cd{f} \subset \Spec{B}$ is closed.
\end{lem}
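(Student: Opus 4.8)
The plan is to reduce to the case of a standard étale morphism and then to use absolute flatness twice: once through the fact that finitely generated ideals of $A$ are generated by idempotents (to spread out a factorization of the relevant polynomial over a clopen neighbourhood), and once through the constructibility of ``splitting type'' over Noetherian bases (to control how that factorization varies).

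First I would record the reductions. Since $f$ is in particular weakly étale, \cref{lemma:weakly-etale-absolutely-flat} gives that $B$ is absolutely flat, so by \cref{lemma:absolutely-flat-profinite-set} both $\Spec{A}$ and $\Spec{B}$ are totally disconnected compact Hausdorff; in particular closedness of $\cd{f}$ may be checked locally on $\Spec{B}$, and residue fields do not change under Zariski localization, so I may assume $f$ is standard étale, say $B \cong (A[T]/(p))_{q}$ with $p$ monic and $\bar q$, $p'$ invertible on $\Spec{B}$. It is also worth noting that $\cd{f}$ is automatically \emph{open}: as $f$ is unramified the relative diagonal $\Spec{B}\to\Spec{B\otimes_A B}$ is an open immersion, and as $f$ is affine it is a closed immersion, so $B\otimes_A B\cong B\times C$; writing $W := \Spec{C}$ and $\mathrm{pr}_1,\mathrm{pr}_2\colon W\to\Spec{B}$ for the two projections, one has $x\notin\cd{f}$ iff there is $z\in W$ with $\mathrm{pr}_1(z)=\mathrm{pr}_2(z)=x$ (the relevant fibre is $\Spec{k(x)\otimes_{k(f(x))}k(x)}$, which is a single point precisely when $k(f(x))\to k(x)$ is an isomorphism), so $\Spec{B}\setminus\cd{f}$ is the $\mathrm{pr}_1$-image of the closed, hence quasi-compact, set $\{z\in W : \mathrm{pr}_1(z)=\mathrm{pr}_2(z)\}$, which is closed in the Hausdorff space $\Spec{B}$. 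So the substance of the statement is that $\cd{f}$ is also closed, i.e.\ clopen.

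For closedness I would show $\Spec{B}\setminus\cd{f}$ is open. Fix $x_0\in\Spec{B}$ with image $y_0\in\Spec{A}$ and $[k(x_0):k(y_0)] = d\ge 2$; then $x_0$ corresponds to an irreducible factor $\pi_0$ of $\bar p\in k(y_0)[T]$ of degree $d$, and $\bar p = \pi_0\rho_0$ with $\pi_0,\rho_0$ coprime. Lift $\pi_0$, $\rho_0$ and a B\'ezout identity to $A[T]$: the coefficients of the resulting ``error terms'' form a finite subset of $A$ vanishing at $y_0$, so (as $A$ is absolutely flat) they generate an ideal $(e)$ with $e$ idempotent. Over the clopen set $\Spec{A/e}$ we then have $p = \tilde\pi\tilde\rho$ with $\tilde\pi,\tilde\rho$ comaximal and $\tilde\pi$ reducing to $\pi_0$; shrinking once more by cutting out with an idempotent (``$\operatorname{disc}(\tilde\pi)$ invertible'' and ``$q\ne 0$'' are quasi-compact open, hence clopen, conditions on $\Spec{A/e}$) I may assume $A'[T]/(\tilde\pi)$, with $A' := A/e$, is finite étale of rank $d$ over $A'$ and coincides with $B\times_{\Spec{A}}\Spec{A'}$ near $x_0$. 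It now suffices to prove that $U := \{y\in\Spec{A'} : \tilde\pi \text{ is irreducible of degree } d \text{ over } k(y)\}$ is a neighbourhood of $y_0$, for then $f^{-1}(U)$ is a neighbourhood of $x_0$ on which every point has residue degree $d\ge 2$.

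The hard part is this last step. Writing $A' = \colimil{i} A'_i$ over the Noetherian finitely generated subrings of $A'$, the polynomial $\tilde\pi$ and its étale structure descend to some $\tilde\pi_i$ over $A'_i$; over the Noetherian base $\Spec{A'_i}$ the loci describing the splitting type of $\tilde\pi_i$ over the residue fields are constructible, hence pull back to \emph{clopen} subsets of $\Spec{A'}$ (in an absolutely flat ring, constructible subsets are clopen). Since a factorization of $\tilde\pi$ over $k(y) = \colimil{i} k(y_i)$ already occurs over some $k(y_i)$, and factorizations only refine with $i$, $U$ is the intersection over $i$ of the clopen sets ``$\tilde\pi_i$ is irreducible over $k(\cdot)$''; the crux is to show this intersection is in fact open at $y_0$ — equivalently, that the splitting type of an étale algebra over an absolutely flat ring is locally constant. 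This is exactly where absolute flatness is indispensable: over a general (even Noetherian) base such loci are only constructible and the analogue of $\cd{f}$ need not be closed, so the argument must genuinely pass from ``constructible at finite stages'' to ``clopen after the colimit''. Everything else — the reductions, the idempotent bookkeeping — is routine.
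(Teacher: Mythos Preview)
Your proposal has a genuine gap at exactly the point you flag as ``the crux''. After the reductions you correctly show that $\cd{f}$ is \emph{open}, and for closedness you express the relevant set $U$ as a decreasing intersection $\bigcap_i U_i$ of clopen subsets of $\Spec{A'}$. But an arbitrary intersection of clopens in a profinite space is only closed, not open (a non-isolated point of the Cantor set is such an intersection), and you offer no argument for the openness of $U$ beyond naming it. Your phrase ``the argument must genuinely pass from constructible at finite stages to clopen after the colimit'' describes precisely the step already accomplished --- each $U_i$ is clopen --- not the missing one. Worse, the statement you reduce to, that the splitting type of a finite \'etale algebra over an absolutely flat base is locally constant, is of the same strength as the lemma itself; knowing $\cd{f}$ is clopen for all \'etale $f$ is essentially what one would use to prove it, so the reduction is circular rather than a simplification.

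The paper's argument is organized differently and avoids this trap. It writes $A = \colimil{i} A_i$ with the $A_i$ of finite type over $\Z$, descends $f$ to \'etale maps $f_i \colon A_i \to B_i$, and takes $W_i \subset \Spec{B_i}$ to be the union of the (finitely many, hence clopen) connected components mapping isomorphically onto a component of $\Spec{A_i}$. One then shows $\cd{f} = \limil{i} W_i$, which is closed as a cofiltered limit of closed subsets. The nontrivial inclusion $\cd{f} \subset \limil{i} W_i$ is obtained by spreading out: given $x \in \cd{f}$ with image $y$, \cref{lem:component-limit} says the connected components of the images of $y$ in $\Spec{A_i}$ shrink down to $\{y\}$, and a finite-presentation descent argument then shows that for $i \gg 0$ a suitable clopen neighbourhood of the image of $x$ maps isomorphically to the component containing the image of $y$, hence lies in $W_i$. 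No separate openness argument and no appeal to an unproven splitting-type statement are needed.
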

\begin{proof}
    Write $A = \colimil{i \in I} A_i$ as a filtered colimit of its finitely generated subrings.
    As $f$ is of finite presentation, by descent there is an index $0 \in I$, a ring $B_0$ and an étale morphism $f_0 \colon B_0 \to A_0$
    such that $f$ is the basechange of $f_0$.
    Write $B_i \coloneqq B_0 \otimes_{A_0} A_i$ for each $i \ge 0$,
    and $f_i \colon A_i \to B_i$.
    In particular, $\Spec{B} \cong \limil{i \ge 0} \Spec{B_i}$.
    Note that $\Spec{B_i}$ is of finite type over $\Spec{\Z}$ (as it is étale over $\Spec{A_i}$ which is of finite type over $\Spec{\Z}$).
    In particular, the connected components of $\Spec{B_i}$ are open.
    Write $W_i \subset \Spec{B_i}$ for the union of those components $W_{i,j} \subset \Spec{B_i}$
    such that $f_i|_{W_{i,j}} \colon W_{i,j} \to \Spec{A_i}$ is an isomorphism onto 
    a component of $\Spec{A_i}$.
    It suffices to prove that $\cd{f} = \limil{i} W_i$:
    Indeed, since $\Spec{B_i}$ has only finitely many components,
    we see that $W_i$ is closed.
    Thus, also $\cd{f} = \limil{i} W_i$ is closed as a limit of closed subsets.

    It is clear that $\limil{i} W_i \subseteq \cd{f}$,
    as $W_i \subseteq \cd{f_i}$.
    So let $x \in \cd{f}$ and $y \coloneqq f(x)$.
    We will write $p_i \colon \Spec{A} \to \Spec{A_i}$ for the projection,
    and $Z_i(y) \subset \Spec{A_i}$ for the component of $p_i(y)$.
    Let $V_i(y) \coloneqq \Spec{B_i} \times_{\Spec{A_i}} Z_i(y) \subseteq \Spec{B_i}$,
    which is a clopen subset.
    As the inclusion $\{x\} \hookleftarrow f^{-1}(y)$ is clopen (because $f^{-1}(y)$ 
    is a discrete set of points), we may assume by descent (possibly by replacing $0$
    by a larger index), that there exists a $T_0 \subseteq V_0(y)$ clopen 
    such that $\{x\} = T_0 \times_{V_0(y)} f^{-1}(y)$,
    and set $T_i \coloneqq \Spec{B_i} \times_{\Spec{B_0}} T_0$ (which is clopen in 
    $V_i(y)$).
    Note that $T_i \times_{Z_i(y)} \{y\} \cong T_i \times_{V_i(y)} f^{-1}(y) \cong \{x\}$.
    As $f$ is completely decomposed at $x$ by definition, it induces an isomorphism 
    $\{x\} \xrightarrow{\simeq} \{y\}$.
    Thus, again by descent and \cref{lem:component-limit}, we conclude
    that $T_i \xrightarrow{\simeq} Z_i(y)$ for $i \gg 0$.
    But $T_i \subseteq V_i(y)$ is clopen, and hence $T_i \subseteq V_i(y) \subseteq \Spec{B_i}$
    is also clopen, and maps isomorphically to $Z_i(y)$. 
    Therefore, $T_i \subseteq W_i$ and we conclude that $x \in \limil{i} W_i$.
\end{proof}

\begin{cor} \label{lemma:completely-decomposed-locus-of-ind-etale-map}
    Let $A$ be absolutely flat and $f \colon A \to B$ an ind-étale algebra.
    Then $\cd{f}$ is closed.
\end{cor}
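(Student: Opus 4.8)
The plan is to reduce the ind-étale case to the étale case already handled in \cref{lemma:completely-decomposed-locus-of-etale-map}, using \cref{lemma:completely-decomposed-locus-of-ind-map}. Write $f \colon A \to B$ as a filtered colimit of étale morphisms $f_i \colon A \to B_i$ over a filtered index category $I$, which we may assume has an initial object $0$; by composing with $A = B_0$ if necessary (or simply noting $A \to A$ is étale) we arrange things so that all transition maps $B_i \to B_j$ are maps of $A$-algebras. Then $\Spec B \cong \limil{i} \Spec{B_i}$, and the projection $f_\infty \colon \Spec B \to \Spec A$ factors through each $\Spec{B_i} \to \Spec A$.

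First I would apply \cref{lemma:completely-decomposed-locus-of-ind-map}, but with a twist: that lemma computes the completely decomposed locus of the \emph{transition} maps $\Spec{B_i} \to \Spec{B_0}$ of a filtered system, whereas here I want the completely decomposed locus of the \emph{structure} maps $\Spec{B_i} \to \Spec A$. The cleanest fix is to observe directly that for $x \in \Spec B$ with image $y \in \Spec A$ and $x_i \in \Spec{B_i}$ the image of $x$, one has $k(y) \to k(x)$ an isomorphism if and only if $k(y) \to k(x_i)$ is an isomorphism for every $i$: the "only if" direction uses the factorization $k(y) \to k(x_i) \to k(x)$ of an isomorphism, so each intermediate map is an isomorphism; the "if" direction uses that $k(x) = \colimil{i} k(x_i)$ by \cite[\href{https://stacks.math.columbia.edu/tag/0CUG}{Tag 0CUG}]{stacks-project}, exactly as in the proof of \cref{lemma:completely-decomposed-locus-of-ind-map}. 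This gives $\cd{f_\infty} = \limil{i} \cd{f_i}$ (with $\cd{f_i} \subset \Spec{B_i}$ pulled back to $\Spec B$ along the projection).

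Now each $A$ is absolutely flat and each $f_i$ is étale, so by \cref{lemma:completely-decomposed-locus-of-etale-map} each $\cd{f_i} \subset \Spec{B_i}$ is closed. The preimage of a closed set under the continuous projection $\Spec B \to \Spec{B_i}$ is closed, and an arbitrary intersection of closed sets is closed, so $\cd{f_\infty} = \bigcap_i (\text{preimage of } \cd{f_i})$ is closed in $\Spec B$, as desired. I do not anticipate a serious obstacle here; the only mildly delicate point is making sure the index-juggling in the first step is set up so that \cref{lemma:completely-decomposed-locus-of-ind-map} (or the direct argument mimicking it) genuinely applies, i.e. that the residue field at $x$ is the filtered colimit of the residue fields at the $x_i$, which is precisely what \cite[\href{https://stacks.math.columbia.edu/tag/0CUG}{Tag 0CUG}]{stacks-project} provides.
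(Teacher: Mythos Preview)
Your proposal is correct and follows essentially the same approach as the paper: reduce to the \'etale case via \cref{lemma:completely-decomposed-locus-of-ind-map} (after adjoining $A$ as the initial object $B_0$ of the filtered system, exactly as you describe), then apply \cref{lemma:completely-decomposed-locus-of-etale-map} and use that a cofiltered limit of closed subsets is closed. The only difference is that the paper leaves the index-juggling implicit, whereas you spell it out.
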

\begin{proof}
    As a limit of closed subsets is closed, 
    we can first reduce to the case that $f$ is étale via \cref{lemma:completely-decomposed-locus-of-ind-map},
    and then conclude by \cref{lemma:completely-decomposed-locus-of-etale-map}.
\end{proof}

\subsection{Weakly Contractible Covers}

\begin{defn} 
    Let $f \colon A \to B$ be a ring map.
    We say that $f$ is a \emph{Nisnevich cover} if $f$ is étale and completely decomposed;
    and that $f$ is an \emph{ind-Nisnevich cover} if $f$ is ind-étale and 
    completely decomposed.
\end{defn}

\begin{defn} 
    A ring $A$ is called \emph{cdw-contractible} if every ind-Nisnevich-cover $A \to B$
    splits.
\end{defn}

\begin{defn}
    Let $A$ be a ring.
    We write $\ind{et}{A}$ for the category of ind-étale $A$-algebras.
\end{defn}

Recall the following definition from \cite[Definition 2.2.10]{bhatt2014proetale}:
\begin{defn} 
    Let $A \to B$ be a ring map.
    We have a base-change functor $\ind{et}{A} \to \ind{et}{B}$.
    Define $\hens{A}{-} \colon \ind{et}{B} \to \ind{et}{A}$
    via \[B' \mapsto \colim{A \xrightarrow{et} A' \to B'} A'.\]
    This is a right adjoint to the base change functor
    and called \emph{henselization}.

    We say that a ring $A$ is \emph{henselian along an ideal $I$}
    if $\hens{A}{A/I} \cong A$.
\end{defn}

\begin{lem} \label{lemma:henselization-henselian-along-ideal-with-right-quotient}
    Let $A$ be a ring, and $I \subseteq A$ an ideal contained in
    the Jacobson radical $I_A$ of $A$.
    Let $\overline{A}$ be an ind-étale $A/I$-algebra.
    Then $B \coloneqq \hens{A}{\overline{A}}$ is henselian along $IB$
    with $B/IB \cong \overline{A}$.
\end{lem}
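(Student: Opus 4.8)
\emph{The plan} is to reduce to \cite[Lemma 2.2.11]{bhatt2014proetale}, which is exactly this statement in the case that $A$ is already henselian along $I$. Write $A^h \coloneqq \hens{A}{A/I}$ for the henselization of $A$ along $I$ (the case $\overline A = A/I$ of the construction above). It is classical that $(A^h, IA^h)$ is a henselian pair with $A^h/IA^h \cong A/I$, compatibly with $A \to A^h$ (see e.g.\ \cite{stacks-project}). So it is enough to exhibit a ring isomorphism $\hens{A}{\overline A} \cong \hens{A^h}{\overline A}$ and then apply \cite[Lemma 2.2.11]{bhatt2014proetale} to the henselian pair $(A^h, IA^h)$ and the ind-étale $A^h/IA^h$-algebra $\overline A$.

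For the comparison $\hens{A}{\overline A} \cong \hens{A^h}{\overline A}$ I would argue by composing adjunctions. Base change $\ind{et}{A} \to \ind{et}{A/I}$ factors as $\ind{et}{A} \to \ind{et}{A^h} \to \ind{et}{A^h/IA^h} = \ind{et}{A/I}$, so the right adjoint $\hens{A}{-}$ is $\hens{A^h}{-}$ (the right adjoint of the second factor) post-composed with the right adjoint of base change along the ind-étale map $A \to A^h$. The latter right adjoint is just the forgetful functor $\ind{et}{A^h} \to \ind{et}{A}$: an ind-étale $A^h$-algebra is automatically ind-étale over $A$, and the universal property of $-\otimes_A A^h$ supplies the adjunction isomorphism $\operatorname{Hom}(C \otimes_A A^h, D) \cong \operatorname{Hom}(C, D)$. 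Hence $\hens{A}{\overline A}$ and $\hens{A^h}{\overline A}$ agree as rings, and \cite[Lemma 2.2.11]{bhatt2014proetale} identifies the latter as a ring henselian along $I\hens{A^h}{\overline A}$ with quotient $\overline A$, which is the assertion.

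The content is thus all imported through \cite[Lemma 2.2.11]{bhatt2014proetale}; in a self-contained treatment the main obstacle is the cofinality argument it rests on. Concretely, $B/IB \cong \colim{A \xrightarrow{et} A' \to \overline A} A'/IA'$, and the maps $A'/IA' \to \overline A$ assemble to a comparison $\eta \colon B/IB \to \overline A$; one shows $\eta$ is an isomorphism by checking that the étale $A/I$-algebras of the form $A'/IA'$, with their maps to $\overline A$, are cofinal among all étale neighborhoods of $\overline A$ over $A/I$ (then $B/IB \cong \hens{A/I}{\overline A} \cong \overline A$, the last step since $\overline A$ is ind-étale over $A/I$). This cofinality requires lifting an arbitrary étale $A/I$-algebra with a map to $\overline A$ to an \emph{affine} étale $A$-algebra with a compatible map to $\overline A$ --- done by lifting standard-étale pieces (here $\overline A$ being an $A/I$-algebra forces a polynomial and its reduction mod $I$ to act the same on $\overline A$, so the roots and units needed for the map to $\overline A$ lift automatically) and gluing, the affineness being the delicate point. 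In this route the hypothesis $I \subseteq I_A$ serves to guarantee $IB \subseteq \operatorname{Jac}(B)$, i.e.\ that $\Spec{\overline A} \to \Spec{B}$ meets every maximal ideal of $B$; this in turn forces two $B$-algebra retractions $C \rightrightarrows B$ of an étale neighborhood that agree modulo $IB$ to coincide (their agreement locus is clopen in $\Spec{B}$ and contains $V(IB) \supseteq \operatorname{Max}(B)$), which is exactly what is needed to conclude $\hens{B}{\overline A} \cong B$, i.e.\ that $(B, IB)$ is henselian.
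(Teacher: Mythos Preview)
Your main argument---factoring base change $\ind{et}{A}\to\ind{et}{A/I}$ through $\ind{et}{A^h}$ and identifying $\hens{A}{\overline A}$ with $\hens{A^h}{\overline A}$ via the composite of right adjoints, then invoking \cite[Lemma 2.2.11]{bhatt2014proetale}---is correct and is a genuinely different route from the paper's. The paper instead argues directly from the colimit formula: it proves $B/IB\cong\overline A$ by the cofinality argument you sketch in your third paragraph (lifting an \'etale $A/I$-algebra $C\to\overline A$ to an \'etale $A$-algebra via \cite[\href{https://stacks.math.columbia.edu/tag/04D1}{Tag 04D1}]{stacks-project}), and it proves henselianness by taking an \'etale $B\to B'$ with a factorization through $\overline A$, descending it by finite presentation to some \'etale $A'\to\tilde B$ appearing in the colimit, and then using the structural map $\tilde B\to B$ to build a retraction $B'\cong B\otimes_{A'}\tilde B\to B\otimes_{A'} B\to B$.

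Your approach buys a clean reduction to a known result and makes the role of the adjunction transparent; the paper's approach is self-contained and makes the combinatorics of the colimit explicit. One small remark: your third paragraph locates the use of $I\subseteq I_A$ in a uniqueness-of-retractions argument, but the paper's direct proof of henselianness does not proceed that way---it simply manufactures one section from the colimit description, without appealing to the Jacobson radical hypothesis at that step.
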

\begin{proof}
    We first prove the second part.
    Since $\overline{A}$ is an ind-étale $A/I$-algebra, we know that $\hens{A/I}{\overline A} \cong \overline{A}$.
    Note that $B/IB = \colimil{A \xrightarrow{et} A' \to \overline{A}} A'/IA'$.
    By basechange, $A/I \to A'/IA'$ is an étale morphism for all such $A'$.
    Thus, it suffices to show that every factorization $A/I \xrightarrow{et} C \to \overline{A}$
    is the basechange of a factorization $A \to A' \to \overline{A}$.
    Note that $A/I \to C$ can be lifted to an étale morphism $A \to A'$ by \cite[\href{https://stacks.math.columbia.edu/tag/04D1}{Tag 04D1}]{stacks-project}.
    But since $A' /IA' = C$, this fits into a factorization $A \to A' \to \overline{A}$ as desired. 

    For the first part we need to show that $B \xrightarrow{\operatorname{id}} B$ is final under étale morphisms $B \to B'$ such that 
    there is a factorization $B \to B' \to B/IB = \overline{A}$.
    So let $B \to B'$ such an étale morphism.
    We need to show that it has a section.
    But $B = \colimil{A \xrightarrow{et} A' \to \overline{A}} A'$.
    By finite presentation of $B \to B'$ there is thus a factorization $A \to A' \to \overline{A}$
    with $A \to A'$ étale and an étale morphism $A' \to \tilde{B}$
    such that $B \otimes_{A'} \tilde{B} \cong B'$.
    Since $A' \to \tilde{B}$ is étale, also $A \to \tilde{B}$ is étale.
    Note that there is a factorization $A \to \tilde{B} \to \overline{A}$.
    We then get a canonical map $\tilde{B} \to B$ by definition of $B$.
    But then $B' \cong B \otimes_{A'} \tilde{B} \to B \otimes_{A'} B \to B$
    gives the desired section.
\end{proof}

\begin{lem} \label{lemma:cdw-contractible-if-cdw-contractible-mod-henselian-ideal}
    Let $A$ be a ring which is henselian along an ideal $I$.
    Suppose $A/I$ is cdw-contractible.
    Then $A$ is cdw-contractible.
\end{lem}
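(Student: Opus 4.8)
The plan is to reduce the statement to the cdw-contractibility of $A/I$ together with the henselization adjunction. Let $f \colon A \to B$ be an ind-Nisnevich cover; we must show that it admits a retraction.

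\textbf{Step 1: the closed fibre is again a cover.} First I would check that $A/I \to B/IB$ is an ind-Nisnevich cover. It is ind-étale because ind-étale maps are stable under base change: writing $B \cong \colimil{i} B_i$ with each $A \to B_i$ étale gives $B/IB \cong \colimil{i} B_i/IB_i$ with each $A/I \to B_i/IB_i$ étale. For complete decomposition, identify $\Spec{A/I}$ with the closed subset $V(I) \subseteq \Spec{A}$ and $\Spec{B/IB}$ with $f^{-1}(V(I))$. Given $\bar x \in \Spec{A/I}$ lying over $x \in \Spec{A}$, pick (using that $f$ is completely decomposed) a point $y \in \cd{f}$ over $x$; since $x \in V(I)$ we automatically have $y \in f^{-1}(V(I))$, and the residue field map $k(\bar x) \to k(\bar y)$ is identified with $k(x) \to k(y)$, hence is an isomorphism. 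Thus $\bar x$ lies in the image of $\cd{A/I \to B/IB}$, so this map is completely decomposed, and therefore an ind-Nisnevich cover.

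\textbf{Step 2: split over the closed fibre and lift.} Since $A/I$ is cdw-contractible, the cover $A/I \to B/IB$ splits, yielding an $A/I$-algebra map $s \colon B/IB \to A/I$. Now I would invoke the adjunction between the base change functor $\ind{et}{A} \to \ind{et}{A/I}$ and the henselization $\hens{A}{-} \colon \ind{et}{A/I} \to \ind{et}{A}$, combined with the hypothesis that $A$ is henselian along $I$, i.e.\ $\hens{A}{A/I} \cong A$:
\[
\Hom{\ind{et}{A/I}}(B/IB,\, A/I) \;\cong\; \Hom{\ind{et}{A}}(B,\, \hens{A}{A/I}) \;\cong\; \Hom{\ind{et}{A}}(B,\, A).
\]
Let $\tilde s \colon B \to A$ be the $A$-algebra map corresponding to $s$. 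Since $\tilde s$ is a morphism of $A$-algebras, $\tilde s \circ f = \id{A}$, so $\tilde s$ is the desired retraction, and $A$ is cdw-contractible.

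The only step carrying genuine content is the descent of complete-decomposedness in Step 1 (the ind-étale part being formal bookkeeping), and one must be a little careful about the variance in the henselization adjunction in Step 2. No finiteness hypothesis on $I$ is needed, since the henselization functor $\hens{A}{-}$ and its adjunction are available for the arbitrary ring map $A \to A/I$; and note that $A$ being henselian along $I$ also forces $I$ into the Jacobson radical, should that be wanted.
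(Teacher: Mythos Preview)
Your proof is correct and follows essentially the same approach as the paper: reduce modulo $I$ to obtain an ind-Nisnevich cover of $A/I$, split it there, and then transport the section back using the henselization adjunction together with $\hens{A}{A/I} \cong A$. The only difference is that you spell out Step~1 (that $A/I \to B/IB$ is again an ind-Nisnevich cover) in detail, whereas the paper simply asserts it.
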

\begin{proof}
    Let $f \colon A \to B$ be an ind-Nisnevich cover.
    Then $f/I \colon A/I \to B/IB$ is an ind-Nisnevich cover and thus has a section.
    But 
    \begin{equation*}
        \Hom{A}(B, A) \cong \Hom{A}(B, \hens{A}{A/I}) \cong \Hom{A/I}(B/IB, A/I).
    \end{equation*}
    This gives us the desired section of $f$.
\end{proof}

\begin{defn}
    Let $T$ be a totally disconnected compact Hausdorff space.
    We say that $T$ is \emph{extremally disconnected} 
    if every surjection $S \twoheadrightarrow T$ from a 
    totally disconnected compact Hausdorff space $S$ has a section.
\end{defn}

\begin{lem} \label{lemma:pi0-extremally-disconnected-iff-cdw-contractible}
    Let $A$ be absolutely flat.
    Then $A$ is cdw-contractible if and only if 
    $\Spec{A} = \pi_0(\Spec A)$ is extremally disconnected.
\end{lem}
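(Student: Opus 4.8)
The plan is to prove both implications by reducing them to the definition of extremal disconnectedness for the totally disconnected compact Hausdorff space $\Spec{A}$ (\cref{lemma:absolutely-flat-profinite-set}), using throughout that an ind-étale --- indeed weakly étale --- algebra over an absolutely flat ring is again absolutely flat (\cref{lemma:weakly-etale-absolutely-flat}). For the implication ``$A$ cdw-contractible $\Rightarrow$ $\Spec{A}$ extremally disconnected'', I would take an arbitrary continuous surjection $T \twoheadrightarrow \Spec{A}$ with $T$ totally disconnected compact Hausdorff and algebraize it: since $\Spec{A} \to \pi_0(\Spec{A})$ is a homeomorphism, \cref{lemma:zariski-cover-of-profinite-map} produces an ind-Zariski localization $A \to B$ realizing $T \to \Spec{A}$ as $\Spec{B} \to \Spec{A}$, and $B$ is again absolutely flat (a local isomorphism over $A$ is Zariski-locally a localization of $A$; now apply \cref{lemma:colimit-of-absolutely-flat}), so $\Spec{B} = \pi_0(\Spec{B}) = T$. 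I then claim $A \to B$ is an ind-Nisnevich cover: over an absolutely flat ring a local isomorphism is a finite product of localizations at idempotents (because quasi-compact opens of a profinite space are clopen), hence étale, so $A \to B$ is ind-étale; and it is completely decomposed, since open immersions are isomorphisms on residue fields, so the completely decomposed locus of each local isomorphism in the colimit defining $A \to B$ is everything, hence so is that of $A \to B$ by \cref{lemma:completely-decomposed-locus-of-ind-map}, while $\Spec{B} = T \to \Spec{A}$ is surjective. A splitting $r$ of this cover then gives, on spectra, the desired continuous section of $T \to \Spec{A}$.

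For the converse, assume $\Spec{A}$ extremally disconnected and let $f \colon A \to B$ be an ind-Nisnevich cover. Then $B$ is absolutely flat, so $\Spec{B}$ is profinite, and $\cd{f} \subseteq \Spec{B}$ is \emph{closed} by \cref{lemma:completely-decomposed-locus-of-ind-etale-map}, hence itself profinite; as $f$ is completely decomposed, $\cd{f} \to \Spec{A}$ is a continuous surjection, so extremal disconnectedness provides a continuous section $\sigma \colon \Spec{A} \to \cd{f} \subseteq \Spec{B}$. To upgrade $\sigma$ to an algebraic section I would set $Z \coloneqq \sigma(\Spec{A})$, a closed subset of $\Spec{B}$ carried homeomorphically onto $\Spec{A}$ by $f$, let $J \subseteq B$ be the radical ideal of functions vanishing on $Z$, and put $B' \coloneqq B/J$. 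Then $B'$ is reduced of dimension $0$, i.e.\ absolutely flat, with $\Spec{B'} = Z$; and since $B$ is absolutely flat the local rings at points of $Z$ are unchanged by passing to $B'$, so the composite $\psi \colon A \xrightarrow{f} B \twoheadrightarrow B'$ is a homeomorphism on spectra and --- because $Z \subseteq \cd{f}$ --- an isomorphism on every residue field. Granting that such a $\psi$ is necessarily an isomorphism, the composite $B \twoheadrightarrow B' \xrightarrow{\psi^{-1}} A$ is a section of $f$ and the proof is complete.

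It remains to establish the key claim, and this is where the real content lies: a morphism $\psi \colon A \to B'$ of absolutely flat rings that is a homeomorphism on spectra and an isomorphism on all residue fields is an isomorphism. Injectivity is immediate from the inclusion $A \hookrightarrow \prod_{\pideal{p}} A_{\pideal{p}}$ and the identity $A_{\pideal{p}} = k(\pideal{p})$ for absolutely flat rings, since the hypothesis makes $\psi$ injective on each localization while $\Spec{\psi}$ is onto. For surjectivity I would proceed as follows. Every module over an absolutely flat ring is flat, so $\psi$ is flat, hence faithfully flat (being injective and surjective on spectra). Next, $B' \otimes_A B'$ is absolutely flat: any of its primes lies over primes $\pideal{q}, \pideal{q}'$ of $B'$ with a common image $\pideal{p}$ in $\Spec{A}$, forcing $\pideal{q} = \pideal{q}'$ since $\Spec{\psi}$ is injective, and localizing there gives a localization of $k(\pideal{q}) \otimes_{k(\pideal{p})} k(\pideal{q}') \cong k(\pideal{q}')$, a field; in particular $B' \otimes_A B'$ is reduced. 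The multiplication $m \colon B' \otimes_A B' \to B'$ is a (split) surjection and $\Spec{m}$ is onto, because every prime of $B' \otimes_A B'$ lies over a diagonal pair; as the source is reduced, $\ker m = 0$, so $m$ is an isomorphism. Finally, faithfully flat descent gives the exact sequence $0 \to A \xrightarrow{\psi} B' \xrightarrow{d} B' \otimes_A B'$ with $d(b) = 1 \otimes b - b \otimes 1$; since $m \circ d = 0$ and $m$ is injective, $d = 0$, hence $A = \ker d = B'$ and $\psi$ is an isomorphism. I expect this claim --- concretely, checking that $B' \otimes_A B'$ is reduced and carrying out the faithfully-flat-descent step --- to be the main obstacle; the topological sections, the absolute flatness of $B$, and the structure of local isomorphisms are routine.
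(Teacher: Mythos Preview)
Your proof is correct and follows essentially the same route as the paper's: both directions proceed via \cref{lemma:zariski-cover-of-profinite-map} and \cref{lemma:completely-decomposed-locus-of-ind-etale-map} respectively, and in the harder direction both pass to the reduced closed subscheme on the image of a topological section. The only substantive difference is that the paper simply asserts that this last closed subscheme maps isomorphically to $\Spec{A}$, whereas you supply a complete argument for this step via faithfully flat descent---so your ``key claim'' is exactly the detail the paper suppresses.
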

\begin{proof}
    First we see from \cref{lemma:absolutely-flat-profinite-set} that because $A$ is absolutely flat, the topological space $\Spec{A}$
    is a totally disconnected compact Hausdorff space, so in particular $\Spec{A} \cong \pi_0(\Spec{A})$.

    Suppose that $A$ is cdw-contractible.
    Let $f_0 \colon T \to \Spec A$ be a surjective morphism of totally disconnected compact Hausdorff spaces.
    But $f_0$ is induced by an ind-Zariski cover $A \to B$, see \cref{lemma:zariski-cover-of-profinite-map},
    which has a section because $A$ is cdw-contractible and ind-Zariski covers 
    are ind-Nisnevich covers. This gives us the topological section $\Spec{A} \cong \pi_0(\Spec A) \to \pi_0(\Spec B) \cong T$.

    Suppose now that $\Spec A$ is extremally disconnected.
    Let $f \colon A \to B$ be an ind-Nisnevich cover.
    Thus, $B$ is absolutely flat, see \cref{lemma:weakly-etale-absolutely-flat} (as ind-étale maps are weakly étale, see \cite[\href{https://stacks.math.columbia.edu/tag/092N}{Tag 092N}]{stacks-project}).
    Using \cref{lemma:completely-decomposed-locus-of-ind-etale-map}, we see that $\cd{\Spec{f}} \subseteq \Spec{B}$
    is closed. Since $f$ is completely decomposed,
    we see that $\cd{\Spec{f}} \to \Spec{B} \to \Spec{A}$ is in fact surjective.
    Let $B'$ be the $B$-algebra realizing the closed subset $\cd{\Spec f}$
    with its reduced scheme structure.
    Then $B'$ is reduced and thus absolutely flat.
    By definition of $B'$, $h \colon \Spec{B'} \to \Spec{A}$ induces isomorphisms 
    on all residue fields. 
    Now, $h$ has a topological section $s$ because $\Spec{A} = \pi_0(\Spec A)$ is extremally disconnected,
    and $\Spec{B'}$ is totally disconnected compact Hausdorff by \cref{lemma:absolutely-flat-profinite-set}.
    Let $\Spec{B''}$ be the reduced closed subscheme of $\Spec{B'}$ realizing the image $s(\Spec A) \subset \Spec{B'}$
    (note that the image of $s$ is closed because $s$ is a map between compact Hausdorff spaces).
    This maps isomorphically to $\Spec{A}$, which gives us the section
    \begin{equation*}
        \Spec{A} \cong \Spec{B''} \subseteq \Spec{B'} \subseteq \Spec{B}.
    \end{equation*}
\end{proof}

\begin{lem} \label{lem:contractible-cover}
    Let $A$ be a ring. Then there is an ind-Nisnevich cover $A \to \overline A$ with
    $\overline A$ a cdw-contractible ring.
\end{lem}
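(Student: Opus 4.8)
The plan is to adapt the construction of w-contractible covers from \cite{bhatt2014proetale}, exploiting two simplifications available in the Nisnevich world. First, a completely decomposed ind-étale cover of a \emph{henselian} local ring already splits, so one never needs a "strictly henselian" condition. Second, by \cref{lemma:pi0-extremally-disconnected-iff-cdw-contractible} cdw-contractibility of an absolutely flat ring is the purely topological condition that its spectrum be extremally disconnected. So I will produce $\overline A$ that is henselian along an ideal $I$ with $\overline A/I$ absolutely flat and $\Spec{\overline A/I}$ extremally disconnected; then $\overline A/I$ is cdw-contractible by \cref{lemma:pi0-extremally-disconnected-iff-cdw-contractible}, and $\overline A$ is cdw-contractible by \cref{lemma:cdw-contractible-if-cdw-contractible-mod-henselian-ideal}.

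First I would reduce to a well-behaved w-local ring. By \cref{lemma:w-local-cover-exists}, choose an ind-Zariski cover $A \to A^Z$ with $A^Z$ w-local and $\Spec{A^Z}^c \to \Spec A$ surjective. Set $Z := \pi_0(\Spec{A^Z})$ and let $T := \beta(Z^\delta)$ be the Stone–Čech compactification of the underlying discrete set of $Z$; then $T$ is extremally disconnected and, via the identity on underlying sets, maps continuously and surjectively onto $Z$. By \cref{lemma:zariski-cover-of-profinite-map} this surjection is realized by an ind-Zariski localization $A^Z \to C$ with $\pi_0(\Spec C) \cong T$; unwinding its construction (so that $\Spec C \cong \Spec{A^Z} \times_{\pi_0(\Spec{A^Z})} T$) should show that $A^Z \to C$ is faithfully flat, hence an ind-Zariski cover, and that $C$ is again w-local. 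Since local isomorphisms are étale and induce isomorphisms on all residue fields — and both properties pass to filtered colimits — the composite $A \to C$ is ind-étale and induces isomorphisms on residue fields at every point, and $\Spec{C}^c \cong T \twoheadrightarrow Z \cong \Spec{A^Z}^c \twoheadrightarrow \Spec A$ is surjective.

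Next I would henselize $C$ along its closed points. Let $I \subseteq C$ be the Jacobson radical; it is a radical ideal, and as $C$ is w-local one has $V(I) = \Spec{C}^c$ (no closure is needed). Hence $C/I$ is reduced with $\Spec{C/I} \cong \Spec{C}^c \cong \pi_0(\Spec C) \cong T$, so $C/I$ is absolutely flat with extremally disconnected spectrum. Now put $\overline A := \hens{C}{C/I}$, applying the henselization functor to the (trivially ind-étale) $C/I$-algebra $C/I$. Since $I$ equals, in particular is contained in, the Jacobson radical of $C$, \cref{lemma:henselization-henselian-along-ideal-with-right-quotient} applies and yields that $\overline A$ is henselian along $I\overline A$ with $\overline A/I\overline A \cong C/I$ (compatibly with the structure maps from $C$). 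By the first paragraph, $\overline A$ is then cdw-contractible. Finally, to see that $A \to \overline A$ is an ind-Nisnevich cover: it is ind-étale, being the composite of the ind-étale $A \to C$ with $C \to \overline A = \colimil{C \xrightarrow{et} C' \to C/I} C'$; and it is completely decomposed, since given $x \in \Spec A$ one picks a closed point $y \in \Spec{C}^c$ over $x$ (so $k(x) \xrightarrow{\sim} k(y)$, because $A \to C$ preserves residue fields), and then $y \in V(I) = \Spec{C/I}$ lifts along $\overline A \twoheadrightarrow \overline A/I\overline A \cong C/I$ to a point $z \in \Spec{\overline A}$ over $x$ with $k(z) \cong k(y)$, whence $k(x) \xrightarrow{\sim} k(z)$.

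I expect the main obstacle to be the appeal to \cite{bhatt2014proetale} in the second paragraph: checking that the ind-Zariski localization supplied by \cref{lemma:zariski-cover-of-profinite-map} can be taken faithfully flat with $C$ w-local, so that its closed points are identified with $T = \pi_0(\Spec C)$ and are therefore extremally disconnected. This rests on the explicit fiber-product description of $C$ together with the structure theory of w-local rings. The remaining points — that the Jacobson radical is a radical ideal, that $V(I) = \Spec{C}^c$, and the residue-field tracking in the last step — are routine, as is the assembly of the four lemmas invoked above.
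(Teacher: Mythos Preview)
Your proposal is correct and follows the same overall strategy as the paper: pass to the w-local cover $A^Z$, arrange for $\pi_0$ to be extremally disconnected via a Stone--\v{C}ech cover, and henselize; the only difference is the order of two steps. You first pull back $T \twoheadrightarrow \pi_0(\Spec{A^Z})$ to all of $\Spec{A^Z}$ to obtain a w-local $C$ with $\pi_0(\Spec C)\cong T$, then reduce modulo the Jacobson radical, then henselize $C$ along $C/I$. The paper instead first reduces $A^Z$ modulo its Jacobson radical (this quotient is already absolutely flat with spectrum $\pi_0(\Spec{A^Z})$), applies \cref{lemma:zariski-cover-of-profinite-map} \emph{there} to obtain an absolutely flat $A'$ with $\Spec{A'}\cong T$, and then sets $\overline A \coloneqq \hens{A^Z}{A'}$. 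The paper's ordering sidesteps exactly the point you flagged as the main obstacle---verifying that your intermediate $C$ is w-local with closed points identified with $T$---since it never constructs $C$ at all; conversely, your route is also fine once one invokes the corresponding structure results on w-local rings from \cite{bhatt2014proetale}.
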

\begin{proof}
    Consider the ind-Zariski cover $A \to A^Z$ by a w-local ring from \cref{lemma:w-local-cover-exists}.
    Note that $A^Z/I_{A^Z}$ is absolutely flat, using \cite[Lemma 2.2.3]{bhatt2014proetale}.
    Let $T$ be an extremally disconnected compact Hausdorff space covering $\Spec{A^Z/I_{A^Z}}$.
    Let $A'$ be an ind-Zariski cover of $A^Z/I_{A^Z}$ such that $\Spec{A'} \to \Spec{A^Z/I_{A^Z}}$
    realizes the map $T \to \Spec{A^Z/I_{A^Z}}$, see \cref{lemma:zariski-cover-of-profinite-map}.
    Then $A'$ is absolutely flat, and cdw-contractible by \cref{lemma:pi0-extremally-disconnected-iff-cdw-contractible}.
    Define $\overline{A} \coloneqq \hens{A^Z}{A'}$.
    Then $\overline{A}$ is henselian along $I_{A^Z}\overline{A}$,
    with quotient $\overline{A}/I_{A^Z}\overline{A} \cong A'$ (see \cref{lemma:henselization-henselian-along-ideal-with-right-quotient}),
    hence $\overline{A}$ is cdw-contractible using \cref{lemma:cdw-contractible-if-cdw-contractible-mod-henselian-ideal}.
    Now note that $A^Z \to \overline{A}$ induces isomorphisms of residue 
    fields at the closed points of $\Spec{A^Z}$.
    Note that $\Spec{A^Z}^c \to \Spec{A}$ is bijective (see \cref{lemma:w-local-cover-exists}) and induces 
    isomorphisms of residue fields.
    Thus, $\Spec{\overline{A}} \to \Spec{A}$ is completely decomposed.
    Since $A \to A^Z$ is ind-Zariski, and $A^Z \to \overline{A}$
    is ind-étale by construction, we conclude that $A \to \overline{A}$
    is an ind-Nisnevich cover.
\end{proof}

\section{The Pro-Nisnevich \texorpdfstring{$\infty$}{infinity}-Topos}
Let $S$ be a qcqs scheme with finite Krull-dimension.
\begin{defn}
    Write $\prosmooth{S}$ for the category of pro-smooth schemes over $S$,
    i.e. morphisms $X \to S$ such that $X$ can be written as a cofiltered limit $X \cong \limil{i} X_i$
    where $X_i \in \smooth{S}$.
    Write $\prosmoothaff{S}$ for the full subcategory of pro-smooth schemes over $S$ which are affine.
\end{defn}

\begin{defn}
    Let $\mathcal U \coloneqq \{f_i \colon U_i \to U\}_{i \in I}$ be a family of morphisms in $\prosmooth{S}$.
    We say that $\mathcal U$ is a \emph{pro-Nisnevich cover} if 
    \begin{itemize}
        \item $f_i$ is pro-étale for all $i \in I$,
        \item the morphism $\amalg_i U_i \to U$ is completely decomposed, and 
        \item the $f_i$ form an fpqc-cover \cite[\href{https://stacks.math.columbia.edu/tag/03NW}{Tag 03NW}]{stacks-project}.
    \end{itemize}

    Similarly, let $\mathcal U \coloneqq \{f_i \colon U_i \to U\}_{i \in I}$ be a family of morphisms in $\prosmoothaff{S}$.
    We say that $\mathcal U$ is a \emph{pro-Nisnevich cover}
    if it is a pro-Nisnevich cover in $\prosmooth{S}$.
\end{defn}

\begin{rmk}
    Let $\Spec{f} \colon \Spec{B} \to \Spec{A}$ be a morphism in $\prosmoothaff{S}$. 
    Then $\{\Spec{f}\}$ is a pro-Nisnevich cover if and only if $f \colon A \to B$
    is an ind-Nisnevich cover.
\end{rmk}

Recall the definition of a site \cite[\href{https://stacks.math.columbia.edu/tag/00VH}{Tag 00VH}]{stacks-project}
and a morphism of sites \cite[\href{https://stacks.math.columbia.edu/tag/00X1}{Tag 00X1}]{stacks-project}.
\begin{lem} \label{lem:sites}
    There are sites $(\prosmooth{S}, \pronis)$ and $(\prosmoothaff{S}, \pronis)$,
    where the covers are given by pro-Nisnevich covers.
\end{lem}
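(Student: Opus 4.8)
The plan is to verify the axioms of a site (in the sense of \cite[\href{https://stacks.math.columbia.edu/tag/00VH}{Tag 00VH}]{stacks-project}) directly, namely: (a) isomorphisms are covers; (b) covers are stable under base change, meaning that if $\{U_i \to U\}$ is a pro-Nisnevich cover and $V \to U$ is any morphism in $\prosmooth{S}$ (resp. $\prosmoothaff{S}$), then the fiber products $U_i \times_U V$ exist in the category and $\{U_i \times_U V \to V\}$ is again a pro-Nisnevich cover; and (c) covers compose. I would treat $\prosmooth{S}$ first and then note that $\prosmoothaff{S}$ inherits everything, once we check fiber products of affines along affines stay affine (which is clear, as $\Spec$ turns the relevant colimit of rings into a limit of affine schemes).

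The first point to nail down is that $\prosmooth{S}$ and $\prosmoothaff{S}$ are closed under the relevant fiber products. For a pro-étale morphism $U_i \to U$ and an arbitrary $V \to U$ in $\prosmooth{S}$: writing $U \cong \limil{j} U^{(j)}$, $V \cong \limil{k} V^{(k)}$ with smooth terms, and using that $U_i \to U$ is, locally on $U$, a cofiltered limit of étale maps, one presents $U_i \times_U V$ as a cofiltered limit of schemes of the form $U^{(j)}_{\mathrm{et-piece}} \times_{U^{(j)}} V^{(k)}$, each of which is smooth over $S$ since smoothness is stable under base change and composition. Hence the fiber product lies in $\prosmooth{S}$. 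I would be slightly careful here to phrase pro-étaleness so that it behaves well under base change — this is standard (pro-étale morphisms are stable under base change and composition, as in \cite{bhatt2014proetale}), but it is the technical backbone of step (b).

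With fiber products in hand, the three closure properties of the covering families follow by checking the three bullet conditions term by term. Pro-étaleness is stable under base change and composition. For the fpqc condition, faithful flatness is likewise stable under base change and composition, and the quasi-compactness bookkeeping is handled exactly as in \cite[\href{https://stacks.math.columbia.edu/tag/03NW}{Tag 03NW}]{stacks-project}. The one genuinely new ingredient is that the \emph{completely decomposed} condition is stable under base change and composition: for composition, if $k(f(x)) \to k(x)$ and $k(g(y)) \to k(y)$ are isomorphisms then so is the composite, and surjectivity of $\cd{-}$ onto the base propagates; for base change, given $x \in V$ lying over $u \in U$, pick $y \in \amalg U_i$ in $\cd{\amalg U_i \to U}$ over $u$ with $k(u) \xrightarrow{\sim} k(y)$, and then the fiber product $\Spec(k(y) \otimes_{k(u)} k(x))$ is nonempty with a point whose residue field receives an isomorphism from $k(x)$, giving a point of $(\amalg U_i)\times_U V$ in the completely decomposed locus over $x$. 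I expect this completely-decomposed-under-base-change verification to be the main obstacle, since it is the part with no off-the-shelf citation; everything else is an assembly of standard stability properties of pro-étale and fpqc morphisms. Finally, isomorphisms are trivially pro-étale, completely decomposed, and fpqc covers, settling (a), and the remark preceding the lemma identifies single-map affine covers with ind-Nisnevich covers, so no separate argument is needed there.
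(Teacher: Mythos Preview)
Your proposal is correct and takes essentially the same approach as the paper: direct verification of the site axioms, with base-change stability as the only nontrivial point. The paper is terser, citing \cite[\href{https://stacks.math.columbia.edu/tag/098L}{Tag 098L}]{stacks-project} and an argument analogous to \cite[\href{https://stacks.math.columbia.edu/tag/098M}{Tag 098M}]{stacks-project} for existence and identification of the relevant pullbacks and simply asserting stability of the completely decomposed condition under base change, whereas you spell these out.
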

\begin{proof}
    The only nontrivial part is the stability of covers under pullbacks.
    First note that the pullback of a pro-étale cover exists and is again 
    a pro-étale cover, which follows from \cite[\href{https://stacks.math.columbia.edu/tag/098L}{Lemma 098L}]{stacks-project}.
    Since completely decomposed families of morphisms are stable under pullbacks in the category of all $S$-schemes,
    the result follows as pullbacks in $\prosmooth{S}$ along pro-étale morphisms 
    coincide with the underlying pullback in the category of all $S$-schemes 
    (this can be proven analogously to \cite[\href{https://stacks.math.columbia.edu/tag/098M}{Tag 098M}]{stacks-project}).
\end{proof}

We can define the following $\infty$-topoi:
\begin{defn}
    We will write 
    \begin{itemize}
        \item $\ShvProNisNH{\prosmooth{S}}$ for the $\infty$-topos of sheaves of anima on the site $(\prosmooth{S}, \pronis)$,
        \item $\ShvProNisH{\prosmooth{S}}$ for the hypercompletion of $\ShvProNisNH{\prosmooth{S}}$,
        \item $\ShvProNisNH{\prosmoothaff{S}}$ for the $\infty$-topos of sheaves of anima on the site $(\prosmoothaff{S}, \pronis)$, and
        \item $\ShvProNisH{\prosmoothaff{S}}$ for the hypercompletion of $\ShvProNisNH{\prosmoothaff{S}}$.
    \end{itemize}
\end{defn}

\begin{lem} \label{lem:affine-non-affine-comparison}
    There are equivalences of $\infty$-topoi
    \begin{equation*}
        \ShvProNisNH{\prosmoothaff{S}} \cong \ShvProNisNH{\prosmooth{S}}
    \end{equation*}
    and 
    \begin{equation*}
        \ShvProNisH{\prosmoothaff{S}} \cong \ShvProNisH{\prosmooth{S}}
    \end{equation*} 
    induced by left Kan extension along the inclusion $\prosmoothaff{S} \hookrightarrow \prosmooth{S}$.
\end{lem}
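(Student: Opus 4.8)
The plan is to recognise the inclusion $\prosmoothaff{S} \hookrightarrow \prosmooth{S}$ as a dense subsite and to invoke the comparison lemma for $\infty$-sites: if $\mathcal C' \subseteq \mathcal C$ is a full subcategory equipped with the topology induced from $\mathcal C$, and every object of $\mathcal C$ admits a covering by objects of $\mathcal C'$, then restriction along the inclusion is an equivalence $\Shv{\mathcal C} \xrightarrow{\sim} \Shv{\mathcal C'}$ of (non-hypercomplete) $\infty$-topoi, whose inverse is the sheafification of the left Kan extension along $\mathcal C' \hookrightarrow \mathcal C$. I will first verify the two hypotheses of this lemma for $\prosmoothaff{S} \hookrightarrow \prosmooth{S}$, and then deduce the hypercomplete statement formally.

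For the hypothesis that $\prosmoothaff{S}$ carries the induced topology, one direction is exactly the definition of a pro-Nisnevich cover in $\prosmoothaff{S}$; for the other I must check that every pro-Nisnevich cover of an affine pro-smooth $S$-scheme can be refined by one all of whose members are affine. This follows once I know (see below) that every object of $\prosmooth{S}$ has an open covering by affine pro-smooth schemes, because pro-étale morphisms, completely decomposed families, and fpqc covers are each stable under composition: one simply post-composes the members of an arbitrary cover with affine open covers of their sources. The same observation supplies the second hypothesis, that every object of $\prosmooth{S}$ is covered by objects of $\prosmoothaff{S}$.

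It therefore remains to prove: (a) if $X \in \prosmooth{S}$ then $X$ has an open cover by affine schemes that are again pro-smooth over $S$; and (b) a Zariski cover in $\prosmooth{S}$ is a pro-Nisnevich cover. For (a), fix a presentation $X \cong \limil{i} X_i$ with $X_i \in \smooth{S}$; by the standard limit arguments for schemes (\cite[\href{https://stacks.math.columbia.edu/tag/01YT}{Limits of Schemes}]{stacks-project}) any quasi-compact open $U \subseteq X$ is the preimage of a quasi-compact open $U_j \subseteq X_j$ for some index $j$, whence $U \cong \limil{i \geq j} U_i$ with each $U_i \subseteq X_i$ open and hence smooth over $S$; affine opens of this form cover $X$. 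For (b), an open immersion is étale, hence pro-étale, and induces isomorphisms on residue fields, so an open cover is completely decomposed; it is evidently an fpqc cover. Together with the comparison lemma this establishes the equivalence $\ShvProNisNH{\prosmoothaff{S}} \cong \ShvProNisNH{\prosmooth{S}}$, compatibly with left Kan extension.

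Finally, the hypercomplete statement is formal: an equivalence of $\infty$-topoi carries $\infty$-connective morphisms to $\infty$-connective morphisms, hence restricts to an equivalence of the hypercompletions, and this restriction is again induced by (sheafification of) the left Kan extension. The main obstacle I anticipate is the bookkeeping in step (a) — producing, for an affine open of a pro-smooth scheme, a pro-smooth presentation compatible with a given presentation of the ambient scheme — together with pinning down the precise $\infty$-categorical form of the comparison lemma to cite; the remaining steps are routine.
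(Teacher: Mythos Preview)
Your proposal is correct and follows essentially the same route as the paper: both arguments invoke a comparison lemma for sites to conclude that restriction along the full inclusion $\prosmoothaff{S} \hookrightarrow \prosmooth{S}$ is an equivalence on sheaf $\infty$-topoi, and then pass to hypercompletions formally. The paper is terser---it cites Hoyois's version of the comparison lemma (\cite[Lemma C.3]{hoyois2015quadratic}) and checks only that the inclusion preserves covers and finite limits (the latter because limits of affine schemes are affine)---whereas you unpack the ``dense subsite'' formulation and spell out the one point the paper leaves implicit, namely that affine opens of a pro-smooth $S$-scheme are again pro-smooth, so that every object of $\prosmooth{S}$ admits a pro-Nisnevich cover by objects of $\prosmoothaff{S}$.
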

\begin{proof}
    Note that the inclusion $u \colon \prosmoothaff{S} \to \prosmooth{S}$ preserves covers by definition,
    and commutes with limits as limits of affine schemes are affine.
    Thus, the first equivalence is an easy application of \cite[Lemma C.3]{hoyois2015quadratic}.
    The second equivalence is just the hypercompletion of the first.
\end{proof}

\begin{defn}
    We write $W$ for the full subcategory of $\prosmoothaff{S}$
    consisting of (spectra of) cdw-contractible rings.
\end{defn}

\begin{lem} \label{lem:psig-topos}
    The category $W$ is extensive (\cite[Definition 2.3]{bachmann2020norms}),
    and the category $\PSig{W}$ is an $\infty$-topos.
\end{lem}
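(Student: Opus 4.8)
The plan is to verify that $W$ is extensive by hand, using that the category $\sch{S}$ of all $S$-schemes is extensive, and then to invoke the general fact that $\PSig{\mathcal{C}}$ is an $\infty$-topos for any small extensive $\infty$-category $\mathcal{C}$; applying this to $\mathcal{C} = W$ gives the statement.

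First I would check that $W$ has finite coproducts, computed as in $\sch{S}$. The initial object is $\Spec{0}$, which lies in $W$ since the zero ring is vacuously cdw-contractible. For $\Spec{A}, \Spec{B} \in W$ the coproduct is the disjoint union $\Spec{A \times B}$, which is again affine and pro-smooth over $S$ (combine the given presentations and use that a finite disjoint union of objects of $\smooth{S}$ stays in $\smooth{S}$). Moreover $A \times B$ is cdw-contractible: an ind-Nisnevich cover $A \times B \to C$ decomposes along an idempotent as $C \cong C_1 \times C_2$ with $A \to C_1$ and $B \to C_2$ again ind-Nisnevich covers --- ind-\'etaleness and complete decomposition are inherited by these clopen restrictions, cf.\ the proof of \cref{lem:sites} --- and splittings of the latter two combine to a splitting of $A \times B \to C$.

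The crucial step is that $W$ is closed under passage to clopen summands: if $\Spec{A} \in W$ and $A \cong A_1 \times A_2$, then $\Spec{A_1}, \Spec{A_2} \in W$. Membership in $\prosmoothaff{S}$ is clear, since an idempotent of $A$ descends through a presentation $\Spec{A} \cong \limil{i} X_i$ (with $X_i \in \smooth{S}$) to a finite stage and exhibits $\Spec{A_1}$ as a cofiltered limit of clopen --- hence again smooth quasi-compact --- subschemes of the $X_i$. For cdw-contractibility, given an ind-Nisnevich cover $f \colon A_1 \to B_1$, I would consider $f \times \id{A_2} \colon A_1 \times A_2 \to B_1 \times A_2$, which on spectra is $\Spec{f} \amalg \id{\Spec{A_2}}$; this is ind-\'etale and completely decomposed (over $\Spec{A_2}$ the identity witnesses complete decomposition, over $\Spec{A_1}$ the map $\Spec{f}$ does), hence an ind-Nisnevich cover of $\Spec{A} \in W$, and so splits. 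Restricting the resulting section $\Spec{A_1} \amalg \Spec{A_2} \to \Spec{B_1} \amalg \Spec{A_2}$ over the clopen $\Spec{A_1}$, whose preimage under $\Spec{f} \amalg \id{\Spec{A_2}}$ is exactly $\Spec{B_1}$, produces a section of $\Spec{f}$. Hence $A_1$, and symmetrically $A_2$, is cdw-contractible.

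Granting these two closure properties, extensivity of $W$ is then formal: $W$ is a full subcategory of the extensive category $\sch{S}$ closed under finite coproducts and under clopen summands, so $W_{/\Spec{0}} \simeq *$, and for $X, Y \in W$ the comparison functor $W_{/X} \times W_{/Y} \to W_{/X \amalg Y}$ is an equivalence --- it is essentially surjective because any $Z \to X \amalg Y$ with $Z \in W$ pulls back to $Z_1 \to X$ and $Z_2 \to Y$ with $Z \cong Z_1 \amalg Z_2$ and $Z_1, Z_2 \in W$, with quasi-inverse $(Z_1, Z_2) \mapsto Z_1 \amalg Z_2$, and it is fully faithful since $\sch{S}$ is. It remains to note that $\PSig{\mathcal{C}}$ is an $\infty$-topos for any small extensive $\infty$-category $\mathcal{C}$ (and $W$ is essentially small, being a full subcategory of $\prosmoothaff{S}$): extensivity ensures that the sieves generated by finite coproduct decompositions $X \cong \coprod_k X_k$ form a Grothendieck topology, that the sheaf condition it imposes on a presheaf $F$ is exactly $F(X) \simeq \prod_k F(X_k)$, and hence that $\PSig{\mathcal{C}} \simeq \Shv{\mathcal{C}}$ for this topology, which is an $\infty$-topos; see \cite[Section 2.1]{bachmann2020norms}. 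I expect the main obstacle to be the closure of $W$ under clopen summands --- in particular verifying that the ``extend by the identity'' construction gives a genuine ind-Nisnevich cover and that its splitting restricts back correctly --- everything else being bookkeeping with the preceding lemmas together with the cited fact about $\mathcal{P}_\Sigma$ of extensive categories.
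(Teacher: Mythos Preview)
Your proposal is correct and follows essentially the same approach as the paper: the paper's proof simply asserts that $W$ is a full subcategory of the extensive category of schemes, stable under finite coproducts and summands (hence extensive), and then cites \cite[Lemma 2.4]{bachmann2020norms} for the $\infty$-topos claim. You have unpacked and verified in detail exactly the two closure properties the paper takes for granted, so your argument is a fleshed-out version of the same one-line proof.
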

\begin{proof}
    The category is extensive, as it is a full subcategory of the category of schemes (which is extensive),
    stable under finite coproducts and summands.
    The second part is \cite[Lemma 2.4]{bachmann2020norms}.
\end{proof}

Recall the notion of a locally weakly contractible site \cite[Definition B.3]{mattis2024unstable}.
\begin{lem} \label{lem:locally-weakly-contractible}
    The site $(\prosmoothaff{S}, \pronis)$ is locally weakly contractible. 
\end{lem}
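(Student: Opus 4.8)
The plan is to verify the conditions of \cite[Definition B.3]{mattis2024unstable}, taking $W$ as the distinguished full subcategory of weakly contractible objects. One must check: (i) $\prosmoothaff{S}$ has finite coproducts and $W$ is closed under finite coproducts and retracts; (ii) every object of $\prosmoothaff{S}$ admits a pro-Nisnevich cover by an object of $W$; and (iii) every object of $W$ is weakly contractible for the pro-Nisnevich topology. Condition (i) is routine: finite coproducts of affine pro-smooth $S$-schemes are affine pro-smooth, and being cdw-contractible (every ind-Nisnevich cover splits) is visibly preserved under finite products and retracts of rings. For (ii), by \cref{lem:contractible-cover} any ring $A$ with $\Spec{A}\in\prosmoothaff{S}$ admits an ind-Nisnevich cover $A\to\overline{A}$ with $\overline{A}$ cdw-contractible; writing $A\to\overline{A}$ as a filtered colimit of étale maps $A\to A_j$ and $\Spec{A}\cong\limil{k}X_k$ with $X_k\in\smooth{S}$, finite presentation of étale morphisms lets each $\Spec{A_j}$ be descended to an étale scheme over some $X_k$, hence smooth quasi-compact over $S$, so $\Spec{A_j}$ and therefore $\Spec{\overline{A}}\cong\limil{j}\Spec{A_j}$ lie in $\prosmoothaff{S}$. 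By the Remark following the definition of pro-Nisnevich covers, $\{\Spec{\overline{A}}\to\Spec{A}\}$ is then a pro-Nisnevich cover.

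The heart of the argument is (iii), and the key device is passage to completely decomposed loci. Let $\Spec{\overline{A}}\in W$ and let $\mathcal{U}=\{U_i\to\Spec{\overline{A}}\}_{i\in I}$ be a pro-Nisnevich cover; write $U_i\cong\limil{\alpha}U_i^\alpha$ with $U_i^\alpha$ affine and étale over $\Spec{\overline{A}}$. For an étale morphism the completely decomposed locus is open and the morphism restricts there to one that is Zariski-locally an open immersion (the standard local structure of étale maps inducing isomorphisms on residue fields); put $C_i^\alpha:=\cd{U_i^\alpha\to\Spec{\overline{A}}}$ and $W_i:=\limil{\alpha}C_i^\alpha$, so that $W_i\to\Spec{\overline{A}}$ is (Zariski-locally) an ind-Zariski localization, $W_i$ maps to $U_i$, every point of $W_i$ has trivial residue field extension over $\Spec{\overline{A}}$, and $\cd{U_i\to\Spec{\overline{A}}}=\limil{\alpha}C_i^\alpha$ inside $U_i$ by the argument of \cref{lemma:completely-decomposed-locus-of-ind-map}. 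Because $\mathcal{U}$ is completely decomposed, $\{W_i\to\Spec{\overline{A}}\}_i$ is jointly surjective, and since all residue field extensions are already trivial, any jointly surjective family of affine opens of the $W_i$ is automatically completely decomposed. The plan is thus: extract finitely many affine opens $V_1,\dots,V_n$ (with $V_l$ open in some $W_{i_l}$) that remain jointly surjective onto $\Spec{\overline{A}}$; then $\coprod_l V_l\to\Spec{\overline{A}}$ is given by a ring map $\overline{A}\to\prod_l\mathcal{O}(V_l)$ that is ind-étale, faithfully flat, and completely decomposed, i.e.\ an ind-Nisnevich cover, which splits as $\overline{A}$ is cdw-contractible; a section yields a decomposition $\Spec{\overline{A}}=\coprod_l Z_l$ into clopen pieces with each composite $Z_l\to V_l\to W_{i_l}\to U_{i_l}\to\Spec{\overline{A}}$ equal to the inclusion $Z_l\hookrightarrow\Spec{\overline{A}}$. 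Hence the covering sieve generated by $\mathcal{U}$ contains the cover $\{Z_l\hookrightarrow\Spec{\overline{A}}\}_l$, every member of which factors through a member of $\mathcal{U}$, which is precisely weak contractibility of $\Spec{\overline{A}}$.

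The main obstacle is the finite-extraction step. The image in $\Spec{\overline{A}}$ of a pro-Zariski piece $W_i$ is the intersection over $\alpha$ of the open sets $f(C_i^\alpha)$, and need not be open, so—unlike in the pro-étale setting of \cite{bhatt2014proetale}, where no complete-decomposedness is imposed and quasi-compactness of $\Spec{\overline{A}}$ alone suffices—one cannot pass to a finite subfamily by naive compactness. I expect to resolve this by descending to $\pi_0$: using that $\Spec{\overline{A}}$ is quasi-compact together with \cref{lemma:absolutely-flat-profinite-set}, \cref{lemma:zariski-cover-of-profinite-map} (and, after a preliminary w-localization as in \cref{lemma:w-local-cover-exists}, the fact that the relevant profinite space is extremally disconnected, cf.\ \cref{lemma:pi0-extremally-disconnected-iff-cdw-contractible}), the whole family of completely decomposed loci can be absorbed into a single ind-Zariski cover of $\Spec{\overline{A}}$, which cdw-contractibility then splits. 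Finally I would record the remaining bookkeeping required by \cite[Definition B.3]{mattis2024unstable}: that on objects of $W$ every pro-Nisnevich covering sieve is generated by a finite family of the special form above, so that the two displayed conditions there are met.
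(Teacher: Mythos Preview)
Your argument for (iii) contains a false claim: the completely decomposed locus of an étale morphism is not open in general. For instance, take $f \colon \mathbb{G}_m \to \mathbb{G}_m$, $t \mapsto t^2$, over an algebraically closed field of characteristic $\neq 2$: every closed point lies in $\cd{f}$ while the generic point does not, so $\cd{f}$ is not open. (\cref{lemma:completely-decomposed-locus-of-etale-map} shows that over an absolutely flat base the locus is \emph{closed}, but $\overline{A}$ is typically not absolutely flat, and ``closed'' is in any case the wrong direction for your construction.) Consequently your $C_i^\alpha$ are not open subschemes, the $W_i$ are not ind-Zariski localizations of $\Spec{\overline{A}}$, and the strategy---including the finite-extraction step you already flag as unresolved---does not get off the ground.

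The paper sidesteps all of this. It checks separately that the pro-Nisnevich topology is finitary and a $\Sigma$-topology (the two items you postpone to ``bookkeeping''), and then observes that objects of $W$ are weakly contractible \emph{by definition}: via the Remark preceding \cref{lem:sites}, a single-morphism pro-Nisnevich cover $\Spec{B} \to \Spec{\overline{A}}$ in $\prosmoothaff{S}$ is exactly an ind-Nisnevich cover $\overline{A} \to B$, and cdw-contractibility is defined as the property that every such map splits. No analysis of completely decomposed loci, no finite extraction, and no appeal to extremal disconnectedness is needed. Your treatment of (ii), by contrast, is more careful than the paper's: you verify explicitly that $\Spec{\overline{A}}$ lands in $\prosmoothaff{S}$, a point the paper leaves implicit.
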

\begin{proof}
    We have seen that the category $W$ is extensive, see \cref{lem:psig-topos},
    and consists by definition of weakly contractible objects.
    The pro-Nisnevich topology is a $\Sigma$-topology, because clopen immersions 
    are in particular pro-étale and induce isomorphisms on the residue fields.
    The pro-Nisnevich topology on $\prosmooth{S}$
    is finitary (cf. \cite[Definition A.3.1.1]{sag}) by definition, so every object is quasicompact.
    Note that every object in $\prosmoothaff{S}$ has a cover by an object in $W$,
    this is the content of \cref{lem:contractible-cover}.
    This proves the lemma.
\end{proof}

\begin{thm} \label{lem:psig-equiv}
    There is an equivalence of $\infty$-topoi
    \begin{equation*}
        \ShvProNisH{\prosmooth{S}} \cong \PSig{W}.
    \end{equation*}
    Thus, $\ShvProNisH{\prosmooth{S}}$ is locally of homotopy dimension $0$ 
    and in particular Postnikov-complete.
\end{thm}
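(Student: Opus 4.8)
The plan is to deduce the equivalence $\ShvProNisH{\prosmooth{S}} \cong \PSig{W}$ by combining the affine-versus-non-affine comparison from \cref{lem:affine-non-affine-comparison} with the fact that $(\prosmoothaff{S}, \pronis)$ is a locally weakly contractible site (\cref{lem:locally-weakly-contractible}). The key input will be the general machinery attached to locally weakly contractible sites — presumably the statement from \cite[Appendix B]{mattis2024unstable} that for such a site $(\mathcal C, \tau)$ with associated full subcategory of weakly contractible objects $W$, the hypercomplete $\infty$-topos of sheaves is equivalent to $\PSig{W}$. So the proof is essentially an application of that theorem to our situation, once we have checked its hypotheses, which is exactly what \cref{lem:locally-weakly-contractible} and \cref{lem:psig-topos} accomplish.

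\medskip

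**Concretely, the steps are as follows.** First, by \cref{lem:affine-non-affine-comparison} it suffices to prove $\ShvProNisH{\prosmoothaff{S}} \cong \PSig{W}$, so we may work entirely on the affine site. Second, invoke \cref{lem:locally-weakly-contractible}: the site $(\prosmoothaff{S}, \pronis)$ is locally weakly contractible, with $W$ the full subcategory of cdw-contractible affines. Third, apply the main comparison theorem for locally weakly contractible sites from \cite{mattis2024unstable}: since every object of $\prosmoothaff{S}$ admits a cover by an object of $W$ (the content of \cref{lem:contractible-cover}) and $W$ is extensive and consists of weakly contractible objects (\cref{lem:psig-topos}), restriction along $W \hookrightarrow \prosmoothaff{S}$ induces an equivalence $\ShvProNisH{\prosmoothaff{S}} \xrightarrow{\ \simeq\ } \PSig{W}$, with inverse given by left Kan extension. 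Here one uses that $\PSig{W}$ is already hypercomplete — being a topos of $\Sigma$-presheaves on an extensive category, all covers split after refinement by coproduct decompositions, so the Čech-descent and hyperdescent conditions coincide and are automatic. Finally, read off the two consequences: $\PSig{W}$ is locally of homotopy dimension $0$ because each object of $W$ is weakly contractible, hence of homotopy dimension $0$, and these form a generating set; and an $\infty$-topos locally of homotopy dimension $0$ is Postnikov-complete by \cite[Theorem 7.2.1.10 / Corollary 7.2.1.12]{htt} (or the corresponding statement in \cite{sag}).

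\medskip

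**The main obstacle** is making sure the cited comparison theorem for locally weakly contractible sites applies verbatim, i.e.\ that all its bookkeeping hypotheses — the topology being a finitary $\Sigma$-topology, $W$ being closed under finite coproducts and retracts, every object being quasicompact and covered by an object of $W$ — are genuinely in place; these were precisely itemized in the proof of \cref{lem:locally-weakly-contractible}, so the work has largely been done, and what remains is to cite the right statement and match notation. A secondary subtlety is the identification of the hypercompletion: one must confirm that on the side of $\PSig{W}$ no further hypercompletion is needed, which follows because covers in the $\Sigma$-topology on $W$ are, up to refinement, coproduct decompositions, so every hypercover is refined by a split one and descent is unconditional. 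Modulo these checks the theorem is immediate.
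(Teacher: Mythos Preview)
Your proposal is correct and follows essentially the same route as the paper: reduce to the affine site via \cref{lem:affine-non-affine-comparison}, then invoke the comparison theorem for locally weakly contractible sites from \cite{mattis2024unstable} (the paper cites Lemma~B.7 there) using \cref{lem:locally-weakly-contractible}, and finally read off homotopy dimension~$0$ and Postnikov-completeness from the standard references. The only cosmetic difference is that the paper cites \cite[Lemma~4.20]{mattis2024unstable} for the homotopy-dimension claim and \cite[Proposition~7.2.1.10]{highertopoi} for Postnikov-completeness, whereas you sketch the reasons directly.
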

\begin{proof}
    There is an equivalence $\ShvProNisH{\prosmooth{S}} \cong \ShvProNisH{\prosmoothaff{S}}$,
    see \cref{lem:affine-non-affine-comparison}.
    Thus, the theorem follows from an application of \cite[Lemma B.7]{mattis2024unstable},
    because the site $(\prosmoothaff{S}, \pronis)$ is locally weakly contractible,
    see \cref{lem:locally-weakly-contractible}.
    See \cite[Lemma 4.20]{mattis2024unstable} for a proof that $\PSig{W}$ is locally of homotopy dimension $0$.
    The last claim is \cite[Proposition 7.2.1.10]{highertopoi}.
\end{proof}

\begin{lem} \label{lem:nis-post-complete}
    The $\infty$-topos $\ShvNis{\smooth{S}}$ is Postnikov-complete.
\end{lem}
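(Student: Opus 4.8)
The plan is to deduce Postnikov‑completeness of $\ShvNis{\smooth{S}}$ from that of the pro‑Nisnevich topos, transporting it along the geometric morphism of \cref{lem:embedding}. I will use the criterion that an $\infty$‑topos is Postnikov‑complete if and only if it is hypercomplete and the Postnikov tower of every object converges, i.e.\ the canonical map $F \to \limil{n}\tau_{\le n}F$ is an equivalence for all $F$ (see \cite[Section A.7]{sag}). By \cref{lem:psig-equiv} the topos $\ShvProNisH{\prosmooth{S}}$ is Postnikov‑complete; it is also hypercomplete, being a hypercompletion by construction. So it suffices to pull both properties back along \cref{lem:embedding}.

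By \cref{lem:embedding} we have a geometric morphism $\nu^* \colon \ShvNis{\smooth{S}} \rightleftarrows \ShvProNisH{\prosmooth{S}} \colon \nu_*$ with $\nu^*$ fully faithful. Hence $\nu_*\nu^* \simeq \operatorname{id}$, the functor $\nu_*$ preserves all limits, and $\nu^*$ — being the inverse image of a geometric morphism, hence a left‑exact left adjoint between $\infty$‑topoi — preserves $n$‑truncated objects. Being in addition a left adjoint, it therefore commutes with truncation: $\nu^*\tau_{\le n}F \simeq \tau_{\le n}(\nu^*F)$ for all $F$ and $n$ (both sides are left adjoint, evaluated on the $n$‑truncated subcategory, to the composite $(\ShvProNisH{\prosmooth{S}})_{\le n}\hookrightarrow \ShvProNisH{\prosmooth{S}}\xrightarrow{\nu_*}\ShvNis{\smooth{S}}$). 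In particular $\nu^*$ sends $\infty$‑connective morphisms to $\infty$‑connective morphisms.

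Now hypercompleteness of $\ShvNis{\smooth{S}}$ is immediate: an $\infty$‑connective morphism $f$ there has $\infty$‑connective image $\nu^* f$ in the hypercomplete topos $\ShvProNisH{\prosmooth{S}}$, hence $\nu^* f$ is an equivalence and so is $f \simeq \nu_*\nu^* f$. For convergence, fix $F \in \ShvNis{\smooth{S}}$; using that $\nu^*F$ is the limit of its Postnikov tower (Postnikov‑completeness of the target), that $\nu^*$ commutes with truncation, that $\nu_*$ preserves limits, and that $\nu_*\nu^* \simeq \operatorname{id}$, we compute
\[
    F \;\simeq\; \nu_*\nu^* F \;\simeq\; \nu_*\limil{n}\tau_{\le n}(\nu^* F) \;\simeq\; \limil{n}\nu_*\nu^*\tau_{\le n}F \;\simeq\; \limil{n}\tau_{\le n}F .
\]
Thus every Postnikov tower in $\ShvNis{\smooth{S}}$ converges, and combined with hypercompleteness this gives Postnikov‑completeness.

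All the ingredients above — that $\nu^*$, being left exact, preserves $n$‑truncated objects and hence, being a left adjoint, commutes with $\tau_{\le n}$; that $\nu_*$ preserves limits; that $\nu_*\nu^* \simeq \operatorname{id}$ — are standard, so the only "obstacle" is the bookkeeping, and I do not expect a genuine difficulty. The one point worth flagging is the logical dependency: this argument is clean provided the first assertion of \cref{lem:embedding} (existence of the geometric morphism with $\nu^*$ fully faithful) is not itself established using the present lemma. If one prefers to avoid that, one can argue directly from hypercompleteness of $\ShvNis{\smooth{S}}$ — which holds because $\dim S < \infty$ forces every object of $\smooth{S}$ to have finite Nisnevich cohomological dimension — and then verify convergence of Postnikov towers using these finite cohomological dimensions; but the route via \cref{lem:embedding} is shorter given the results already in hand.
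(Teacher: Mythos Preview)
Your main argument is circular in the context of this paper. The proof of \cref{lem:embedding} begins with the sentence ``Both involved $\infty$-topoi are Postnikov-complete, see \cref{lem:psig-equiv} and \cref{lem:nis-post-complete}'', so the very existence of the geometric morphism with fully faithful $\nu^*$ is established \emph{using} the present lemma. You anticipated this possibility in your final paragraph, but you did not check it, and in fact the dependency is real: the application of \cite[Proposition B.8]{mattis2024unstable} in the proof of \cref{lem:embedding} requires Postnikov-completeness of the source as a hypothesis. Hence the transport argument, while formally correct as a piece of $\infty$-topos theory, cannot serve as a proof of this lemma here.

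The fallback you sketch at the end---deduce hypercompleteness and convergence of Postnikov towers directly from the fact that every object of $\smooth{S}$ has finite Nisnevich cohomological dimension (because $S$ is qcqs of finite Krull dimension)---is exactly the paper's approach. The paper cites \cite[Lemma 5.1]{mattis2024unstable} in the noetherian case (ultimately resting on \cite[Theorem 3.7.7.1]{sag}) and \cite[Theorem 3.18]{clausen2021hyperdescent} in general. You should promote that sketch to the actual proof and drop the route via \cref{lem:embedding}.
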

\begin{proof}
    If $S$ is noetherian, then the proof of this result is completely analogous to the proof of \cite[Lemma 5.1]{mattis2024unstable}.
    Here we use that $S$ is qcqs of finite Krull dimension (and that schemes in $\smooth{S}$ possess the same property, note that for us,
    they are by definition quasi-compact).
    If $S$ is not noetherian, we can still apply the same strategy, but resort to \cite[Theorem 3.18]{clausen2021hyperdescent}
    instead of \cite[Theorem 3.7.7.1]{sag}.
\end{proof}

\begin{thm} \label{lem:embedding}
    There is a geometric morphism 
    \begin{equation*}
        \nu^* \colon \ShvNis{\smooth{S}} \rightleftarrows \ShvProNisH{\prosmooth{S}} \colon \nu_*,
    \end{equation*}
    where the right adjoint is given by restriction, and the left adjoint is fully faithful.

    Moreover, an $n$-truncated sheaf $F \in \ShvProNisH{\prosmooth{S}}$ is in the essential image of $\nu^*$ 
    if and only if for all $U \in \prosmooth{S}$ and all presentations of $U$ as cofiltered limit
    $U \cong \limil{i} U_i$ (with the $U_i \in \smooth{S}$) 
    the canonical map $\colimil{i} F(U_i) \to F(U)$ is an equivalence.
\end{thm}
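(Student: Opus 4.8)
The plan is to exhibit $(\nu^{*},\nu_{*})$ as the geometric morphism attached to the inclusion of sites $u\colon\smooth S\hookrightarrow\prosmooth S$, to deduce full faithfulness from the fact that the continuous extension of a Nisnevich hypersheaf is already a pro-Nisnevich hypersheaf, and to identify the essential image by comparing the counit of the adjunction with that continuous extension. \textbf{Step 1: the geometric morphism.} The functor $u$ is continuous: a Nisnevich cover of $\smooth S$ is a pro-Nisnevich cover of $\prosmooth S$ (étale maps are pro-étale, an étale surjective cover is fpqc, complete decomposition is unchanged), and $u$ preserves the fibre products occurring in the Čech nerves of such covers, as in the proof of \cref{lem:sites}. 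Hence restriction along $u$ carries pro-Nisnevich sheaves of anima to Nisnevich sheaves, and since $\ShvNis{\smooth S}$ is hypercomplete (\cref{lem:nis-post-complete}) it carries pro-Nisnevich hypersheaves to objects of $\ShvNis{\smooth S}$; this is $\nu_{*}$. It preserves limits and is accessible, so it has a left adjoint $\nu^{*}=L^{\wedge}\circ u_{!}$, with $u_{!}$ the left Kan extension along $u$ and $L^{\wedge}$ pro-Nisnevich hypersheafification. The comma categories $(V\!\downarrow\!u)$ for $V\in\prosmooth S$ are cofiltered --- any presentation $V\cong\limil{i}V_{i}$ gives a cofinal subcategory, using that each $V_{i}$ is of finite presentation over $S$ so that $\Map{\prosmooth S}{V}{-}=\colimil{i}\Map{}{V_{i}}{-}$ --- hence $u_{!}$ is left exact and $(\nu^{*},\nu_{*})$ is a geometric morphism. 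Finally $u^{*}u_{!}\simeq\operatorname{id}$ as $u$ is fully faithful, and on $\prosmoothaff S$ one has $u_{!}F(V)\simeq\colimil{i}F(V_{i})$ for every presentation $V\cong\limil{i}V_{i}$.

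\textbf{Step 2: full faithfulness.} It suffices that the unit $F\to\nu_{*}\nu^{*}F$ be an equivalence for $F\in\ShvNis{\smooth S}$. The key assertion is
\[
(\star)\qquad u_{!}F\ \text{is already a pro-Nisnevich hypersheaf whenever }F\in\ShvNis{\smooth S},
\]
for then $\nu^{*}F=u_{!}F$ and $\nu_{*}\nu^{*}F=u^{*}u_{!}F=F$. A clean route to $(\star)$, and the reason full faithfulness holds without truncation hypotheses, is that the Nisnevich topology on $\smooth S$ is exactly the topology induced from the pro-Nisnevich topology through the fully faithful $u$: for $U\in\smooth S$ and a pro-Nisnevich covering sieve $R$ of $uU$, the sieve $u^{-1}R$ on $U$ is a Nisnevich covering sieve. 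Indeed, over each henselian local scheme $\Spec{\mathcal{O}^{h}_{U,x}}$ --- a cofiltered limit of the Nisnevich neighbourhoods $X'$ of $x$ --- the pro-Nisnevich cover $R$ splits, since a pro-Nisnevich cover of a henselian local scheme always has a section (spread a residue-field-trivial point over the closed point and lift through henselianity compatibly over a pro-system of étale models), so $u^{-1}R$ contains a Nisnevich neighbourhood of every point of $U$. Consequently $u^{*}$ carries pro-Nisnevich-local equivalences to Nisnevich-local equivalences; applied to $u_{!}F\to L^{\wedge}u_{!}F$, whose source and target restrict to Nisnevich sheaves by Step 1, this forces the restriction to be an equivalence, giving $(\star)$ and hence the unit equivalence. (The passage to hypercomplete objects is handled in the same way, using that $\ShvNis{\smooth S}$ is hypercomplete and that the pro-Nisnevich site is locally weakly contractible, \cref{lem:locally-weakly-contractible}.)

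\textbf{Step 3: the essential image.} As $\nu^{*}$ is fully faithful, an $n$-truncated $F\in\ShvProNisH{\prosmooth S}$ lies in its image iff the counit $\nu^{*}\nu_{*}F\to F$ is an equivalence. For such $F$ the object $\nu_{*}F$ is an $n$-truncated Nisnevich hypersheaf, so $(\star)$ gives $\nu^{*}\nu_{*}F=u_{!}(\nu_{*}F)$ as presheaves; hence for any presentation $U\cong\limil{i}U_{i}$ one computes $(\nu^{*}\nu_{*}F)(U)=\colimil{i}(\nu_{*}F)(U_{i})=\colimil{i}F(U_{i})$, and under this identification the counit at $U$ is precisely the canonical comparison $\colimil{i}F(U_{i})\to F(U)$. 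Since equivalences of hypersheaves are detected objectwise, the counit is an equivalence iff this comparison map is an equivalence for every $U$ and every presentation, which is the stated criterion.

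\textbf{Main obstacle.} The genuinely delicate point is $(\star)$, i.e.\ pro-Nisnevich descent for the continuous extension $u_{!}F$. Via \cref{lem:psig-equiv} this reduces to checking descent along a covering $V'\to V$ with $V'\in W$ cdw-contractible; presenting such a covering compatibly at finite level produces étale --- but in general \emph{not} Nisnevich --- models $\pi_{i}\colon V'_{i}\to V_{i}$, so that $u_{!}F(\check{C}(V'/V))\simeq\colimil{i}F(\check{C}(V'_{i}/V_{i}))$ degreewise, and one must commute the totalization $\limil{\Delta}$ past the filtered colimit $\colimil{i}$ and identify the outcome with $\colimil{i}F(V_{i})$. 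This is where finite Krull dimension of $S$ is used: the Nisnevich $\infty$-topos then has finite cohomological dimension, so for $n$-truncated $F$ the totalization may be computed on a finite partial totalization, which commutes with filtered colimits --- and controlling those finite partial totalizations rests again on the henselian-local splitting of pro-Nisnevich covers from Step 2. Everything else (continuity, left exactness, the formal adjunction bookkeeping, and reading off the essential image) I expect to be routine; this commutation-and-identification is the one step requiring real work.
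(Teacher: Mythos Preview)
Your overall architecture matches the paper's: both reduce the theorem to the assertion that the left Kan extension $u_!F$ of an $n$-truncated Nisnevich sheaf is already a pro-Nisnevich hypersheaf (what you call $(\star)$), and both identify this as the substantive step, with the paper outsourcing it to \cite[Proposition~B.8 and Theorem~B.24]{mattis2024unstable} while you unpack the argument directly.

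There is, however, a genuine logical slip in Step~2. Your cocontinuity argument (that every pro-Nisnevich covering sieve on $U\in\smooth S$ restricts to a Nisnevich covering sieve) is correct, and it does show that $u^*$ carries pro-Nisnevich (hyper)local equivalences to Nisnevich local equivalences. Applying this to $u_!F\to L^\wedge u_!F$ yields that the \emph{restriction} of this map is an equivalence, i.e.\ the unit $F\to\nu_*\nu^*F$ is an equivalence --- which is exactly full faithfulness. But you then write ``giving $(\star)$'': that is a non-sequitur. Knowing that $u^*$ of a map is an equivalence does not make the map itself an equivalence; $(\star)$ asserts $u_!F\simeq L^\wedge u_!F$ globally, not just after restriction to $\smooth S$. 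So cocontinuity buys you full faithfulness directly, but it does \emph{not} prove $(\star)$ for arbitrary $F$, and you should not claim it does.

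This matters for Step~3, where you need $\nu^*\nu_*F = u_!(\nu_*F)$ as presheaves, i.e.\ $(\star)$ applied to the $n$-truncated sheaf $\nu_*F$. Your ``Main obstacle'' paragraph correctly isolates this as the hard point and sketches the right mechanism (present the cover at finite level, use bounded cohomological dimension to truncate the totalization, then commute with the filtered colimit) --- this is precisely what the paper defers to \cite[Theorem~B.24]{mattis2024unstable}. So the fix is organizational: separate the two arguments. Use cocontinuity (or, equivalently, $(\star)$ for truncated $F$ plus Postnikov-completeness of both topoi, \cref{lem:psig-equiv} and \cref{lem:nis-post-complete}) for full faithfulness; reserve $(\star)$ itself for $n$-truncated $F$, proved via the commutation argument, and invoke it only in Step~3.
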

\begin{proof}
    Both involved $\infty$-topoi are Postnikov-complete, see \cref{lem:psig-equiv} and \cref{lem:nis-post-complete}.
    We want to apply \cite[Proposition B.8]{mattis2024unstable}
    to the natural inclusion of sites 
    \begin{equation*}
        (\smooth{S}, \nis) \subset (\prosmooth{S},\pronis).
    \end{equation*}
    In the notation of \cite[Proposition B.8]{mattis2024unstable},
    it remains to prove that $\iota_h j^* F \cong k^* \iota_h' F$ for every $n$-truncated Nisnevich sheaf $F \in \ShvNis{\smooth{S}}$,
    i.e.\ we have to show that the presheaf $k^* \iota_h' F$ is already a pro-Nisnevich hypersheaf.
    The proof of this fact is completely analogous to the proof of \cite[Theorem B.24]{mattis2024unstable}.
\end{proof}

\bibliography{bibliography}
\end{document}